\theoremstyle{plain}
 \newtheorem{theorem}{Theorem}[section]
 \newtheorem{proposition}[theorem]{Proposition}
 \newtheorem{lemma}[theorem]{Lemma}
 \newtheorem{corollary}[theorem]{Corollary}
\theoremstyle{definition}
 \newtheorem{definition}[theorem]{Definition}
 \newtheorem{example}[theorem]{Example}
 \newtheorem{conjecture}[theorem]{Conjecture}
\theoremstyle{remark}
 \newtheorem{remark}[theorem]{Remark}
\begin{document}
\title[Stable cohomological dimension]{Essential dimension, stable cohomological dimension, and stable cohomology of finite Heisenberg groups}

\author[Bogomolov]{Fedor Bogomolov$^1$}
\address{F. Bogomolov, Courant Institute of Mathematical Sciences\\
251 Mercer St.\\
New York, NY 10012, U.S.A., \emph{and}}
\address{Laboratory of Algebraic Geometry, GU-HSE\\
7 Vavilova Str.\\
Moscow, Russia, 117312}
\email{bogomolo@courant.nyu.edu}
\author[B\"ohning]{Christian B\"ohning$^2$}
\address{Christian B\"ohning, Fachbereich Mathematik der Universit\"at Hamburg\\
Bundesstra\ss e 55\\
20146 Hamburg, Germany}
\email{christian.boehning@math.uni-hamburg.de}

\thanks{$^1$ Supported by NSF grant DMS-1001662 and by AG Laboratory GU- HSE grant RF government ag. 11 11.G34.31.0023}
\thanks{$^2$ Supported by the German Research Foundation (Deutsche Forschungsgemeinschaft) through Heisenberg-Stipendium BO 3699/1-1}

\newcommand{\PP}{\mathbb{P}} 
\newcommand{\QQ}{\mathbb{Q}} 
\newcommand{\ZZ}{\mathbb{Z}} 
\newcommand{\CC}{\mathbb{C}} 
\newcommand{\rmprec}{\wp}
\newcommand{\rmconst}{\mathrm{const}}
\newcommand{\xycenter}[1]{\begin{center}\mbox{\xymatrix{#1}}\end{center}} 
\newboolean{xlabels} 
\newcommand{\xlabel}[1]{ 
                        \label{#1} 
                        \ifthenelse{\boolean{xlabels}} 
                                   {\marginpar[\hfill{\tiny #1}]{{\tiny #1}}} 
                                   {} 
                       } 
\setboolean{xlabels}{false} 

\

\begin{abstract}
We compare the notions of essential dimension and stable cohomological dimension of a finite group $G$, prove that the latter is bounded by the length of any normal series with cyclic quotients for $G$, and show that, however, this bound is not sharp by showing that the stable cohomological dimension of the finite Heisenberg groups $H_p$, $p$ any prime, is equal to two.
\end{abstract}

\maketitle

\section{Introduction}\xlabel{sIntroduction}

Let $G$ be a finite group and $F$ a finite $G$-module. The ground field will be the field of complex numbers unless otherwise specified for all objects below defined over a field. Then we define the \emph{stable cohomology} $H^*_{\mathrm{s}} (G, F)$, following \cite{Bogo93}, compare also the survey article \cite{Bogo05}, as a quotient of the usual group cohomology $H^* (G, F)$ by the ideal $I_{\mathrm{unstable}}$ of unstable classes which is the kernel of the map
\[
H^* (G, F) \to \mathrm{lim}_{U} H^* (U/G, \tilde{F} )
\]
where $U$ runs over (smaller and smaller) $G$-invariant nonempty Zariski open subset of a generically free (complex) linear $G$-representation $V$, contained in the open subset $V^L$ of $V$ on which the $G$-action has trivial stabilizers; $\tilde{F}$ denotes the local system of abelian groups induced by $F$ on (by slight abuse of notation) any of the $U/G$; the kernel $I_{\mathrm{unstable}}$ does not depend on which $V$ we choose.

Alternatively, if $\mathrm{BGal}_V$ denotes the absolute Galois group of $\CC (V)^G$, we can say that $H^*_{\mathrm{s}} (G, F)$ is equal to $H^*_{\mathrm{s}} (G, F)$ modulo the kernel of the map
\[
H^* (\mathrm{B}G, \tilde{F}) \to H^* (\mathrm{BGal}_V , \tilde{F} )
\]
derived from the natural map $f_V: \mathrm{BGal}_V \to \mathrm{B}G$. Both spaces $\mathrm{BGal}_V$ and $\mathrm{B}G$ carry natural topologies, and in general terms one could say that what the present article is about is to get a feeling for the homotopy type, call it $\Omega_G$, of the image of $f_V$ (again $\Omega_G$ is independent of $V$).  Eventually, it would be highly desirable to have an explicit topological construction for $\Omega_G$, starting from group-theoretic or combinatorial data associated to $G$ and its representation theory. This is related to the problem to construct for any given finite group $G$ an explicit discrete group $\Gamma_G$ with a homomorphism $\Gamma_G \to G$  such that $I_{\mathrm{unstable}}$ coincides with the kernel of the map $H^* (G, F) \to H^* (\Gamma_G, F)$. Such a $\Gamma_G$ exists, it suffices to take the fundamental group of a sufficiently small Zariski open $U/G$ in $V^L/G$ in the notation above, where $U/G$ is an Eilenberg-McLane space $K(\Gamma_G, 1)$, but there is no constructive method to obtain $U$ or, phrased differently, to tell how many divisors we have to remove to obtain stabilization.

Hence here we study some first dimension-theoretic properties of $\Omega_G$ in concrete examples. More precisely, the structure and roadmap of the paper is as follows: in Section \ref{sDef} we give a definition for the basic notion of stable cohomological dimension, capturing the dimension theory of $\Omega_G$ from a cohomological point of view. We also discuss the related notion of essential dimension for $G$ which captures the dimension theoretic properties of the class of free $G$-varieties $V^L$ can map equivariantly to. We compare the two notions. Actually, in the case of a finite $p$-group, Merkurjev showed that the essential dimension is equal to the dimension of the smallest faithful representation of that group \cite{KM}, but the stable cohomological dimension is usually much smaller, in fact in Section \ref{sNormalSeries} we prove that the stable cohomological dimension of a finite $p$-group $G$ is bounded by the length of any normal series for $G$ with cyclic quotients. In the subsequent sections we will show that the stable cohomological dimension is frequently even smaller than this bound.

In Section \ref{sHeisenbergHesse} we study stable cohomology of the finite Heisenberg group $H_3$ via a geometric construction taking its starting point from the classical Hesse pencil. In particular, we determine the stable cohomology of $H_3$ with constant coefficients of $H_3$ completely in this geometric way.

Finally, in Section \ref{sHeisenberg} we show that the stable cohomological dimension of the finite Heisenberg groups $H_p$, $p$ any prime, is two, showing that the bound obtained in Section \ref{sNormalSeries} is not sharp. We conclude with a conjecture that the top degree of nonvanishing stable cohomology  with arbitrary coeffcients is the same as that using only constant coefficients.

\section{Definitions and first properties}\xlabel{sDef}

The notion of essential dimension was defined in \cite{Reich}. The latter encapsulates only one but an important
aspect of the behavior of  the so called stable cohomology of groups which was introduced and studied in \cite{Bogo88}, \cite{Bogo93}, \cite{BPT} among others. 

Let $G$ be an algebraic group. The word variety below will not imply that the object is irreducible. If this is intended, we will say irreducible variety.

\begin{definition}\xlabel{dEssDim}
\begin{itemize}
\item[(1)]
A \emph{primitive} $G$-variety is a variety $X$ with an action of $G$ that permutes the components of $X$ transitively. The $G$-action is called \emph{generically free} if there is an open subset $U \subset X$, intersecting each irreducible component nontrivially, on which the $G$-action is free.
\item[(2)]
For a generically free $G$-variety one defines a $G$\emph{-compression} as a dominant $G$-equivariant rational map $X \dasharrow Y$, where $Y$ is another generically free $G$-variety. 
\item[(4)]
The \emph{essential dimension} $\mathrm{ed}(X)$ of a primitive generically free $G$-variety $X$ is the smallest possible value of $\dim (Y/G)$ where $X \dasharrow Y$ is a $G$-compression. The \emph{essential dimension of} $G$, denoted $\mathrm{ed}(G)$, is the essential dimension of a generically free linear $G$-representation.
\end{itemize}
\end{definition}

The essential dimension of $G$ is also equal to the smallest integer $d$ such that any $G$-torsor on a function field over $\CC$ can be induced from a generically free algebraic $G$-scheme of dimension not exceeding $d$, see \cite{GMS}.

Let us introduce the second set of notions around which this paper is centered, going back to \cite{Bogo93}.

\begin{definition}\xlabel{defSCD}
\begin{itemize}
\item[(1)]
Let $X$ be an irreducible variety. The stable cohomological dimension of $X$, denoted $\mathrm{scd} (X)$, is the maximum integer $n$ such that there exists a locally constant sheaf of finite abelian groups $F$ on $X$ with $H^n_{\mathrm{s}} (X, \: F)\neq 0$.
\item[(2)]
Let $G$ be an algebraic group. Then the stable cohomological dimension $\mathrm{scd}(G)$ is defined as the maximum integer $n$ such that there exists a finite $G$-module $M$ with $H^n_{\mathrm{s}} (G, \: M)\neq 0$. 
\end{itemize}
\end{definition} 

\begin{remark}\xlabel{rFinite}
The numbers $\mathrm{scd}(X)$ and $\mathrm{scd} (G)$ are clearly finite: in the first case, it is bounded by $\dim X$, in the second by $\dim V/G$ where $V$ is a generically free $G$-representation. 
\end{remark}

First we state some immediate consequences of the definitions.

\begin{proposition}\xlabel{pStart}
\begin{itemize}
\item[(1)] If $G$ is a finite group and $H\subset G$ a subgroup, then $\mathrm{scd} (G) \ge \mathrm{scd} (H)$. 
\item[(2)] For a finite group $G$ one has \[ \mathrm{scd} (G) = \mathrm{max}_p \{ \mathrm{scd} (\mathrm{Syl}_p (G) ) \} . \]
\item[(3)] 
One always has the inequality $\mathrm{scd} (G) \le \mathrm{ed} (G)$. 
\end{itemize}
\end{proposition}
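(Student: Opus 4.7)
For Part (1), the plan is to use a geometric form of Shapiro's lemma. If $V$ is a generically free $G$-representation, then $V$ is automatically generically free as an $H$-representation, and for any $G$-invariant open $U \subset V^L$ the map $U/H \to U/G$ is a finite \'etale cover. Pushforward of local systems along this cover corresponds to induction $\mathrm{Ind}_H^G$ on the module side, so $H^n(U/H, \tilde M) \cong H^n(U/G, \widetilde{\mathrm{Ind}_H^G M})$. Passing to the limit over $U$ (using that $G$-invariant opens are cofinal among $H$-invariant opens in $V^L$), one obtains $H^n_{\mathrm{s}}(H, M) \cong H^n_{\mathrm{s}}(G, \mathrm{Ind}_H^G M)$, so any non-zero class for $H$ in degree $n$ produces a non-zero class for $G$ in the same degree.

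For Part (2), the inequality $\mathrm{scd}(G) \geq \max_p \mathrm{scd}(\mathrm{Syl}_p(G))$ is a special case of Part (1). For the reverse, choose a finite $G$-module $M$ with $H^n_{\mathrm{s}}(G, M) \neq 0$ and split $M = \bigoplus_\ell M_{(\ell)}$ into primary parts; the induced decomposition of $H^n_{\mathrm{s}}$ forces $H^n_{\mathrm{s}}(G, M_{(p)}) \neq 0$ for some prime $p$. The restriction map $H^n(G, M_{(p)}) \to H^n(\mathrm{Syl}_p(G), M_{(p)})$ is injective because its composition with the transfer is multiplication by the index $[G:\mathrm{Syl}_p(G)]$, which is invertible on the $p$-primary module $M_{(p)}$. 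Both restriction and transfer are realised geometrically as pullback and pushforward along the \'etale cover $U/\mathrm{Syl}_p(G) \to U/G$, so they descend to the stable quotients and preserve the non-vanishing, giving $H^n_{\mathrm{s}}(\mathrm{Syl}_p(G), M_{(p)}) \neq 0$.

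For Part (3), let $\phi: V \dashrightarrow Y$ be a $G$-compression realising $\mathrm{ed}(G) = \dim(Y/G)$, and choose a non-empty affine open $W \subset Y^L/G$ of dimension $\mathrm{ed}(G)$. There exists a $G$-invariant open $U \subset V^L$ on which $\phi$ is defined and whose induced map $\bar\phi: U/G \to Y^L/G$ factors through $W$. For any $\alpha \in H^n(G, M)$, the restriction to $H^n(U/G, \tilde M)$ factors through $\bar\phi^*: H^n(W, \tilde M) \to H^n(U/G, \tilde M)$. Artin vanishing for constructible sheaves on a complex affine variety gives $H^n(W, \tilde M) = 0$ once $n > \dim W = \mathrm{ed}(G)$, so $\alpha$ is unstable in this range. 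The one non-formal input, and the main obstacle worth highlighting, is this Artin vanishing (equivalently, the Andreotti--Frankel estimate on the homotopy type of a complex affine variety); the rest is bookkeeping with the definition of $I_{\mathrm{unstable}}$.
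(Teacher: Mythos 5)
Your proposal is correct and follows essentially the same route as the paper: Shapiro's lemma realised via the \'etale cover $U/H \to U/G$ for (1), the primary decomposition plus restriction--transfer to Sylow subgroups for (2), and the factorisation of $V^0/G \to \mathrm{B}G$ through $Y^0/G$ for (3). The only difference is one of explicitness: in (3) you spell out the Artin/Andreotti--Frankel vanishing on an affine open $W \subset Y^L/G$ that justifies the bound $\dim Y/G$ rather than $2\dim Y/G$, a step the paper leaves implicit.
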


\begin{proof}
One sees (1) from the equality
\[
H^d (H, \: N) = H^d (G, \: \mathrm{Ind}^G_{H} (N))
\]
for a finite module $N$ over $H$, and the fact that stable classes in the cohomology of the left hand side give stable classes for the cohomology of $G$ on the right hand side (restrict to $H$ to see this).

\

For (2) notice that we have a decomposition of cohomology groups $H^*_{\mathrm{s}} (G, M) = \bigoplus_p H^*_{\mathrm{s}} (G, M_{(p)})$ where $M_{(p)}$ is the $p$-primary component of the finite $G$-module $M$, and an injection \[ H^*_{\mathrm{s}} (G, M_{(p)}) \hookrightarrow \bigoplus_p H^*_{\mathrm{s}} (\mathrm{Syl}_p (G), \: M_{(p)}). \] This shows that $\mathrm{scd} (G) \le \mathrm{max}_p \{ \mathrm{scd} (\mathrm{Syl}_p (G) ) \}$. On the other hand, by (1), $\mathrm{scd} (G) \ge \mathrm{max}_p \{ \mathrm{scd} (\mathrm{Syl}_p (G) ) \}$ holds as well.

\

For (3), let $V$ be a generically free $G$-representation. Let $V \dasharrow Y$ be a $G$-compression inducing a dominant map $V/G \dasharrow Y/G$ with $\dim Y/G \le \mathrm{ed} (G)$. The (topological) map from $V^0/G$ to the classifying space $\mathrm{B}G $ factors:
\[
V^0/G \to Y^0/G \to \mathrm{B}G 
\]
where $V^0$ and $Y^0$ are open subsets where the action of $G$ is free. Hence the stable cohomology of $G$ vanishes in degrees larger than $\dim Y/G$.
\end{proof}

\begin{remark}\xlabel{rBackground}
It was shown in \cite{KM} that the essential dimension of a finite $p$-groups is equal to the minimal dimension of a faithful linear representation of the group. The essential dimension of a finite abelian group equals its rank (see for example \cite{BR}, Thm. 6.1), hence is equal to the stable cohomological dimension. The known values for $\mathrm{ed} (\mathfrak{S}_n)$, the essential dimension of the symmetric group, can be found in \cite{M12}, Thm. 3.22. In particular, $\mathrm{ed} (\mathfrak{S}_7) = 4$ whereas $\mathrm{scd}(\mathfrak{S}_7)=3$; the latter can be seen because the stable cohomological dimension of $\ZZ/2\wr \ZZ/2 \times \ZZ/2 = \mathrm{Syl}_2 (\mathfrak{S}_7)$ is $3$ (cf. \cite{B-B12} where the stable cohomology of wreath products is determined). It shows that one can have strict inequality $\mathrm{scd} (G) < \mathrm{ed} (G)$. 
\end{remark}

\begin{example}\xlabel{eWreath}
For an iterated wreath product $\ZZ/p \wr \ZZ/p \wr \dots \wr \ZZ/p$ ($n$ factors) of cyclic groups of prime order $p$, the essential dimension and stable cohomological dimension coincide and are equal to $p^{n-1}$. This is immediate from \cite{KM} (one has a faithful representation of dimension $p^{n-1}$) and \cite{B-B12}. 
\end{example}

\section{Bounds on stable cohomological dimension via normal series}\xlabel{sNormalSeries}

In this section, we bound $\mathrm{scd}(G)$ for a finite $p$-group $G$ in terms of the length of a normal series for $G$ with cyclic quotients.

\begin{theorem}\xlabel{tNormalSeries}
Let $G$ be a finite $p$-group and let
\[
\{1\} = G_0 \lhd G_1 \lhd \dots \lhd G_n = G
\]
be a normal series for $G$, thus $G_i$ is normal in the next group $G_{i+1}$. Suppose that the quotients $G_{i+1}/G_i$ are cyclic. Then $\mathrm{scd}(G) \le n$.
\end{theorem}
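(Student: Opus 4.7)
My plan is induction on $n$. The base case $n=1$ is immediate: a cyclic $G$ admits a faithful one-dimensional representation, so $\mathrm{scd}(G)\le 1$ by Remark~\ref{rFinite}. For the inductive step set $N=G_{n-1}$ and $C=G/N$; by hypothesis $C$ is cyclic (of order $m$ say), and $\mathrm{scd}(N)\le n-1$ by induction. The idea is to fibre a suitable small open of $V^L/G$ over $\mathbb{C}^*$ by a Kummer cover so that the generic fibre is governed by $N$ and the base has cohomological dimension one.

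Lift a faithful character $\chi:C\hookrightarrow\mathbb{C}^*$ to $G$; after replacing a generically free $G$-representation $V$ by $V\oplus\mathbb{C}_\chi$, we have a $\chi$-semi-invariant linear form $P:V\to\mathbb{C}_\chi$. On the $G$-invariant open $V^*=V^L\cap\{P\ne 0\}$ set $U=V^*/G$ and $\tilde U=V^*/N$; since $P$ is $N$-invariant and $P^m$ is $G$-invariant we obtain a cartesian square
\[
\begin{CD}
\tilde U @>P>> \mathbb{C}^* \\
@VVV @VV{z\mapsto z^m}V \\
U @>P^m>> \mathbb{C}^*
\end{CD}
\]
exhibiting $\tilde U\to U$ as the $\mu_m$-Kummer cover $y^m=P^m$. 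The generic fibre of $P^m$ is $(P^{-1}(s)\cap V^L)/N$; since the obstruction cocycle lies in $H^1(N,\ker P)=0$, the affine $N$-variety $P^{-1}(s)$ is $N$-equivariantly isomorphic to the generically free $N$-subrepresentation $\ker P\subset V$, and the no-name lemma gives that the generic fibre has stable cohomological dimension at most $\mathrm{scd}(N)\le n-1$.

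Given $c\in H^d(G,M)$ with $d\ge n+1$, apply the induction hypothesis to $\mathrm{res}^G_N(c)$: after intersecting with $C$-translates and shrinking $U$ we may assume $c|_{\tilde U}=0$. Since $\tilde U\to U_t$ restricts to $m$ disjoint copies of $U_t$ over every fibre, this forces $c|_{U_t}=0$ for each $t$. Because $\mathbb{C}^*$ has cohomological dimension one, the Leray spectral sequence for $P^m$ collapses to
\[
0\to H^1(\mathbb{C}^*,R^{d-1}(P^m)_*M)\to H^d(U,M)\to H^0(\mathbb{C}^*,R^d(P^m)_*M)\to 0,
\]
and fibrewise vanishing places $c|_U$ in the $H^1$ term. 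Its generic stalk lies in $H^{d-1}(U_\eta,M)$ with $d-1\ge n>\mathrm{scd}(U_\eta)$, hence dies on some open of $U_\eta$; spreading that open out to a horizontal divisor $D\subset U$ and restricting to a suitable Zariski open of the base then kills $c$ on the resulting Zariski open of $V^L/G$, so $c$ is unstable. The main obstacle is this last spreading-out step: converting the bound $\mathrm{scd}(U_\eta)\le n-1$ into vanishing of the coinvariant class $c|_U\in H^1(\mathbb{C}^*,R^{d-1}(P^m)_*M)$ after removing a divisor, which rests on naturality of the Leray filtration under restriction to opens.
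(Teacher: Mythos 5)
Your proof is correct and takes essentially the same approach as the paper: both reduce to showing $\mathrm{scd}(G)\le\mathrm{scd}(G_{n-1})+1$ by fibering a small open subset of $V^L/G$ over $\CC^{*}$ via the auxiliary character $\chi$, and then exploiting that the base has cohomological dimension one while the fibre is governed by the kernel subgroup, so that the two-step filtration of the spectral sequence gives the bound. The only difference is presentational: you run the Leray spectral sequence of $P^{m}\colon U\to\CC^{*}$ directly on the level of varieties, whereas the paper first replaces $\pi_1(U/G)$ by the fibre product $G_{\ZZ}=G\times_{\ZZ/p^{k}}\ZZ$ and uses the Lyndon--Hochschild--Serre sequence of $1\to H\to G_{\ZZ}\to\ZZ\to 1$; the spreading-out step you flag at the end is likewise present (implicitly) in the paper's passage from $H$ to a stabilizing group $H_{s}$ inside the fibration.
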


\begin{proof}
We use induction on $n$, the case $n=1$ being clear. Thus we can suppose $H \lhd G$ is a normal subgroup with $G/H \simeq \ZZ/p^k$ cyclic, and we will show that $\mathrm{scd} (G) \le \mathrm{scd} (H) +1$. 

Take a faithful representation $V$ of $G$, and let $V^L$ be an open subvariety of $V$ which is a $\mathrm{K} (\pi , 1)$-space and generically free for the $G$-action. Let $\chi \, :\, G \to \ZZ/p^k$ be a surjection with kernel $H$, and let $\CC_{\chi}$ be a one dimensional representation where $G$ acts via the character $\chi$. The representation $V \oplus \CC_{\chi }$ contains an open subvariety $U:=V^L \times (\CC_{\chi} - \{ 0\} )$ which is a $\mathrm{K} (\pi , 1)$ and on which $G$ acts freely. We obtain a natural homomorphism $\pi_1 (U/G) \to G$ which gives a partial stabilization: the kernel of the map $H^* (G, \: F) \to H^*_{\mathrm{s}} (G, F)$ contains the kernel of the map $H^* (G, F) \to H^* (\pi_1 (U/G), F)$ for an arbitrary finite coefficient module $F$.  Let $G_{\mathbb{Z}}$ be the fiber product
\[
\xymatrix{
G_{\mathbb{Z}} \ar[r]^{\pi_{\mathbb{Z}}} \ar[d] & \mathbb{Z} \ar[d] \\
G \ar[r]^{\chi }  &  \mathbb{Z}/p^k .
}
\]
Then there is a commutative triangle
\[
\xymatrix{
\pi_1 (U/G) \ar[r] \ar[rd] & G_{\mathbb{Z}} \ar[d] \\
  &  G 
}
\]
hence the kernel of $H^* (G, F) \to H^* (G_{\mathbb{Z}}, F)$ is also killed under stabilization. Consider the spectral sequence arising from the projection $G_{\mathbb{Z}} \twoheadrightarrow \mathbb{Z}$ with kernel $H$. Since $H^i (\mathbb{Z}, L) = 0$ for $i> 1$ and any $\mathbb{Z}$-module $L$, the spectral sequence splits into short exact sequences{\small
\[
\xymatrix{ 
0 \ar[r] & H^1 (\mathbb{Z}, H^{i-1} (H, F)) \ar[r] & H^i (G_{\mathbb{Z}}, F) \ar[r] & H^0 (\mathbb{Z}, H^i (H, F)) \ar[r] & 0
}
\]}
and stabilization gives a map between exact sequences{\small 
\[
\xymatrix{
0 \ar[r] & H^1 (\mathbb{Z}, H^{i-1} (H, F)) \ar[r] \ar[d] & H^i (G_{\mathbb{Z}}, F) \ar[r] \ar[d] & H^0 (\mathbb{Z}, H^i (H, F)) \ar[r] \ar[d] & 0\\
0 \ar[r] & H^1 (\mathbb{Z}, H^{i-1} (H_s, F)) \ar[r] & H^i (G_{\mathbb{Z}}, F) \ar[r] & H^0 (\mathbb{Z}, H^i (H_s, F)) \ar[r] & 0
}
\]}
where $H_s$ is a group stabilizing the cohomology of $H$.  Hence, the image of $H^i (G_{\mathbb{Z}}, F)$ is zero in the stable cohomology of the fibration $U \to (\CC_{\chi } - \{ 0 \})$  for $i \ge \mathrm{scd}(H) + 2$. Thus $\mathrm{scd} (G) \le \mathrm{scd}(H) +1$ as claimed. 
\end{proof}

\section{The Heisenberg group $H_3$ and the Hesse pencil} \xlabel{sHeisenbergHesse}

Let $H_3$ be the Heisenberg group over the finite field $\mathbb{F}_3$ sitting in an extension
\[
0 \to \ZZ/3 \to H_3 \to \ZZ/3 \oplus \ZZ/3 \to 0
\]
with the standard generators $x$ and $y$ in the two copies of the quotient $\ZZ/3 \oplus \ZZ/3$ satisfying $x^3 = y^3 = 1$, $x^{-1}y^{-1} xy= z$ where $z$ is a generator of the central $\ZZ/3$. If $E$ is a smooth elliptic curve and $\mathcal{L}$ a degree $3$ line bundle on $E$, then $H_3$ acts on $V^*=H^0 (E, \mathcal{L})$ via the standard Schr\"odinger representation, embedding $E$ into $\PP^2$ as a curve of the Hesse pencil
\[
\lambda (z_1^3 + z_2^3 + z_3^3 ) + \mu z_1z_2z_3 = 0 ,
\]
where the coordinates $z_1, z_2, z_3 \in H^0 (E, \mathcal{L})$ are chosen such that $x\in \ZZ/3$ acts as $z_i \mapsto \omega^i z_i$ with $\omega$ a primitive cube root of unity, and $y$ acts as $z_i \mapsto z_{i-1}$ with lower indices interpreted mod. $3$. For much of the geometry associated to it a good reference is \cite{ArtDolg}.

The nine base points of the pencil, call the set of those $\Sigma$, are the inflection points of any curve in the pencil and located at
\[
(1: -\varrho : 0 ), \quad (0: 1: -\varrho), \quad  (-\varrho: 0:1 ), \quad \; \varrho^3 =1 .
\]
The Hesse pencil gives a rational map $\PP^2 = \PP (V) \dasharrow \PP^1$, $(z_1: z_2: z_3) \mapsto (z_1z_2z_3, z_1^3 +z_2^3 +z_3^3)$, undefined at the nine base points, whence after blowing those up we get a regular map
\[
\varphi : S(3) \to \PP^1
\]
where $S(3)$ is a rational surface having the structure of minimal elliptic surface via $\varphi$ whose fibers are the members of the Hesse pencil. $S(3)$ is a so-called \emph{elliptic modular surface of level} $3$. If $\{ b_1, \dots , b_k \} \subset \PP^1$ are the points corresponding to singular fibers of $S(3)$, $(\PP^1)^0$, $S(3)^0$ the corresponding open complements, $\varphi^0 : S(3)^0 \to (\PP^1)^0$ the induced map on the complement of the singular fibers, then $\varphi^0$ is topologically  a smooth oriented fibration with fiber $S^1 \times S^1$ with a section over $\PP^1$. It is classified by the homomorphism $\pi_1 ((\PP^1)^0 ) \to \mathrm{SL}_2 (\ZZ )$ (and the topology of the base). Its image is the monodromy group of the fibration. In this case it is the modular group
\[
\Gamma (3) = \left\{ \gamma \in \mathrm{SL}_2 (\ZZ ) \mid \gamma \equiv \left( \begin{array}{cc} 1 & 0 \\ 0 & 1 \end{array}\right) (\mathrm{mod}\, 3)\right\} .
\]

\medskip

There is a commutative diagram of fibrations

\begin{center}
\xymatrix{
\CC^{\ast} \simeq \mathrm{B}\ZZ\ar@{^{(}->}[d] \ar[rr]  & & \mathrm{B}(\ZZ/3) \ar@{^{(}->}[d] \\
(V \backslash{\pi^{-1}(\Sigma)})/H_3 \ar[d]^{\pi} \ar[rr] & &  \mathrm{B} H_3 \ar[d]\\
(\PP (V)\backslash \Sigma) / (\ZZ/3)^2 \simeq (S(3) \backslash ( 9 \;\mathrm{sections} ))/(\ZZ/3)^2 \ar[rr] & & \mathrm{B} (\ZZ/3 \oplus \ZZ/3) 
}
\end{center}

and, correspondingly, we have two spectral sequences of fibrations 

\begin{gather*}
E_2^{pq} = H^p ( \ZZ/3\oplus \ZZ/3 , H^q (\ZZ/3 )) \implies H^{p+q} (H_3) , \\
E_2^{pq} = H^p ((S(3) \backslash ( 9 \;\mathrm{sections} ))/(\ZZ/3)^2 , \mathcal{H}^q (\CC^* ) ) \implies H^{p+q} ((V \backslash{\pi^{-1}(\Sigma)})/H_3 ) 
\end{gather*}
(all cohomology taken with some constant coefficients $F$ here, and we write $ \mathcal{H}^q (\CC^* )$ to indicate that this is a local system on the base), 
connected by a morphism of spectral sequences induced by the diagram. We are interested in the image of the cohomology $H^* (H_3)$ in the direct limit of the cohomologies of complements of divisors in $V/H_3$ (i.e., we are interested in $H^*_{\mathrm{s}} (H_3)$). 

\begin{lemma}\xlabel{lImageH2}
For any constant coefficients $F$, the image of the cohomology $H^2 (\ZZ/3 \oplus \ZZ/3, F)$ in the stable cohomology \[ H^2_{\mathrm{s}} (S(3)/ (\ZZ/3 \oplus \ZZ/3), F)\] is zero.
\end{lemma}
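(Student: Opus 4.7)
My plan is to show that for each of a basis of $H^2(\ZZ/3 \oplus \ZZ/3, F)$, the pullback along the classifying map $S(3)/(\ZZ/3 \oplus \ZZ/3) \to \mathrm{B}(\ZZ/3 \oplus \ZZ/3)$ vanishes on some Zariski-dense open of a smooth birational model of $S(3)/(\ZZ/3 \oplus \ZZ/3)$; intersecting the finitely many such opens produces a single open killing the whole image, which yields the lemma. Since $\ZZ/3 \oplus \ZZ/3$ is a $3$-group, $H^*(\ZZ/3 \oplus \ZZ/3, F)$ is $3$-primary, and by a standard long-exact-sequence reduction in the coefficients it suffices to treat $F = \ZZ/3$. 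In that case $H^2(\ZZ/3 \oplus \ZZ/3, \ZZ/3)$ is three-dimensional, with basis the two Bocksteins $\beta(a_1), \beta(a_2)$ of the degree-one generators and the cup product $a_1 \cup a_2$, the latter being the class of the Heisenberg extension $1 \to \ZZ/3 \to H_3 \to \ZZ/3 \oplus \ZZ/3 \to 1$.

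For each Bockstein $\beta(a_i)$ I use the standard identification of it with the mod-$3$ reduction of $c_1$ of the tautological line bundle on $\mathrm{B}(\ZZ/3 \oplus \ZZ/3)$ associated to the $i$-th projection character $\ZZ/3 \oplus \ZZ/3 \to \CC^*$. Pulling back along the classifying map yields a line bundle $L_i$ on a smooth birational model of $S(3)/(\ZZ/3 \oplus \ZZ/3)$. Line bundles on smooth varieties are Zariski-locally trivial, so $L_i$ trivializes on the complement of the support of any nonzero rational section; on that open, $c_1(L_i)$, and hence $\beta(a_i)$, pulls back to zero.

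For $a_1 \cup a_2$ I exploit the diagram of fibrations displayed just before the lemma. The Serre $d_2$ in the group-cohomology fibration $\mathrm{B}\ZZ/3 \to \mathrm{B}H_3 \to \mathrm{B}(\ZZ/3 \oplus \ZZ/3)$ sends the generator of $H^1(\mathrm{B}\ZZ/3, \ZZ/3)$ to $a_1 \cup a_2$, while the corresponding $d_2$ in the geometric fibration $\CC^* \to (V \setminus \pi^{-1}(\Sigma))/H_3 \to (\PP(V) \setminus \Sigma)/(\ZZ/3 \oplus \ZZ/3)$ sends the generator of $H^1(\CC^*, \ZZ/3)$ to the Euler class of that $\CC^*$-bundle; naturality of the Serre spectral sequence therefore identifies the pullback of $a_1 \cup a_2$ with this Euler class. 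But that $\CC^*$-bundle is the descent of the tautological bundle $V \setminus \{0\} \to \PP(V)$ through $H_3 \twoheadrightarrow \ZZ/3 \oplus \ZZ/3$; since the central $\ZZ/3 \subset H_3$ acts on $V$ by scalars, the line bundle $\mathcal{O}_{\PP(V)}(-3)$ carries a trivial central action and descends to a bona fide line bundle on the quotient (away from a codimension-two bad locus, which may be harmlessly removed). Once again this line bundle trivializes on a Zariski-dense open, so the Euler class, and with it $a_1 \cup a_2$, dies there.

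The main technical step is the identification in the previous paragraph: checking that comparison of the two Serre spectral sequences really yields the equality between the pullback of $a_1 \cup a_2$ and the Euler class of the geometric $\CC^*$-bundle on the quotient, and that $\mathcal{O}_{\PP(V)}(-3)$ descends birationally to a genuine line bundle on $S(3)/(\ZZ/3 \oplus \ZZ/3)$. The specific geometry of the Hesse pencil — namely that $\Sigma$ is a single $(\ZZ/3 \oplus \ZZ/3)$-orbit and that the center of $H_3$ acts on $V$ by scalars — is what makes both of these go through cleanly.
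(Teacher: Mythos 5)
Your argument takes a genuinely different route from the paper's, and its two main identifications are correct. The paper never chooses generators: it shows that every class pulled back from $H^2(\ZZ/3\oplus\ZZ/3,F)$ is unramified along the exceptional divisors of the minimal resolution $\tilde S$ of the four $A_2$ points of $S(3)/(\ZZ/3\oplus\ZZ/3)$ (the residues factor through $H^2_{\mathrm{s}}(\ZZ/3,-)=0$ for the local decomposition groups), hence extends to the projective rational surface $\tilde S$, where all of $H^2$ is algebraic and therefore dies under stabilization. You instead exhibit the generators of $H^2(\ZZ/3\oplus\ZZ/3,\ZZ/3)$ as mod-$3$ Chern classes of explicit algebraic line bundles on the free locus of the quotient: $\beta(a_i)=c_1(L_{\chi_i})\bmod 3$, and, via the comparison of the two Serre $d_2$'s, the pullback of $a_1\cup a_2$ as the mod-$3$ Euler class of the $\CC^*$-bundle $(V\setminus\pi^{-1}(\Sigma))/H_3\to(\PP(V)\setminus\Sigma)/(\ZZ/3)^2$, i.e.\ of the descent of $\mathcal{O}_{\PP(V)}(-3)$ (linearizable for $(\ZZ/3)^2$ precisely because the centre of $H_3$ acts by scalars). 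Both proofs then finish the same way --- Chern classes of algebraic line bundles vanish after removing the support of a rational section --- but you trade the residue computation and the rationality of $\tilde S$ for an explicit basis, which also makes the link to the Heisenberg fibration used in Proposition \ref{pStableH3} more transparent.

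The one genuine gap is the opening reduction to $F=\ZZ/3$. The d\'evissage you invoke does not close: from $0\to\ZZ/3\to F\to F''\to 0$, knowing that the images of $H^2(\Gamma,\ZZ/3)$ and of $H^2(\Gamma,F'')$ die stably only tells you that the stabilization of $x\in H^2(\Gamma,F)$ lies in the image of $H^2_{\mathrm{s}}(U,\ZZ/3)\to H^2_{\mathrm{s}}(U,F)$, and the class of $H^2_{\mathrm{s}}(U,\ZZ/3)$ so produced need not come from $H^2(\Gamma,\ZZ/3)$. Worse, the reduction is false at the level of generators: the map $H^2(\Gamma,\ZZ/9)\to H^2(\Gamma,\ZZ/3)$ is nonzero (it is an isomorphism on $H^2(\Gamma,\ZZ)\otimes-$), so $H^2(\Gamma,\ZZ/9)$ is not generated by the image of $H^2(\Gamma,\ZZ/3)$. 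The repair uses only your own tools: by universal coefficients $H^2(\Gamma,F)$ is an extension of $\mathrm{Tor}(H^3(\Gamma,\ZZ),F)\cong F[3]$ by $H^2(\Gamma,\ZZ)\otimes F$; the subgroup is spanned by the classes $c_1(L_{\chi_i})\otimes f$, which die stably exactly as in your Bockstein step, and the quotient is covered by the classes $j_*(a_1\cup a_2)$ for homomorphisms $j:\ZZ/3\to F$, which die stably by your Euler-class step. So run the argument on these generators for arbitrary constant $F$ rather than claiming it suffices to treat $\ZZ/3$.
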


\begin{proof}
By e.g. \cite{ArtDolg}, Prop. 5.1, the quotient surface $S(3)/(\ZZ/3 \oplus \ZZ/3)$ has four singular points $s_1, \dots , s_4$ of type $A_2$, given by the orbits of the vertices of the four singular members of the Hesse pencil, which are four triangles. Analytically locally, they are of type $\CC^2/(\ZZ/3)$. Outside of the vertices of the triangles, the action is free. Thus the map 
\[
H^2 (\ZZ/3 \oplus \ZZ/3 , F) \to H^2_{\mathrm{s}} (S(3)/ (\ZZ/3 \oplus \ZZ/3), F)
\]
factors over $S(3)/((\ZZ/3 \oplus \ZZ/3) - \{ s_1, \dots , s_4 \}$ and also, denoting by $\tilde{S}$ the minimal resolution of $S(3)/ (\ZZ/3\oplus \ZZ/3)$, $D^{\pm}_1, \dots , D^{\pm}_4$ the divisors lying over $s_1, \dots , s_4$, the map factors over $H^2 (\tilde{S} - \{ D^{\pm}_1, \dots , D^{\pm}_4 \} , F)$. Now we claim that the image in
\[
H^2 (\mathrm{Gal} (\CC (\tilde{S})), F)
\]
 of every element coming from $H^2 (\ZZ/3 \oplus \ZZ/3 , F)$ has zero residue with respect to any of the divisorial valuations associated to the $D^{\pm}_i$: in fact, the image of the decomposition groups associated to these valuations in $\ZZ/3 \oplus \ZZ/3$ are of type $\ZZ/3$, and the residue maps factor over the stable cohomology of these groups $\ZZ/3$. Hence any element coming from $H^2 (\ZZ/3 \oplus \ZZ/3 , F)$ extends from the open part $\tilde{S} - \{ D^{\pm}_1, \dots , D^{\pm}_4 \}$  through the generic point of any of the divisors $D_i^{\pm}$ and hence to all of $\tilde{S}$. Since $\tilde{S}$ is a rational surface, its second cohomology consists only of classes of algebraic line bundles. All of these are killed under the stabilization morphism. 
\end{proof}

\begin{proposition}\xlabel{pStableH3}
The stable cohomology of the group $H_3$ in degree $3$
\[
H^3_{\mathrm{s}} (H_3, \ZZ/3)
\]
is zero.
The ring $H^*_{\mathrm{s}} (H_3, \ZZ/3)$ can be described by saying that it is generated by elements $y, y'$ of degree $1$, $Y, Y'$ of degree $2$, and all products between any two of these are zero.
\end{proposition}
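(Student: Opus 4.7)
The strategy is to use the morphism of spectral sequences from the excerpt, together with Lemma~\ref{lImageH2}, to determine the image of $H^*(H_3, \ZZ/3)$ in stable cohomology. I would begin by analyzing the Lyndon--Hochschild--Serre spectral sequence
\[
E_2^{pq} = H^p((\ZZ/3)^2, H^q(\ZZ/3, \ZZ/3)) \implies H^{p+q}(H_3, \ZZ/3)
\]
associated to the central extension $1 \to \ZZ/3 \to H_3 \to (\ZZ/3)^2 \to 1$. The transgression is $d_2(z) = yy'$, where $y, y'$ are dual to the two generators of $(\ZZ/3)^2$ and $z$ generates $H^1(\ZZ/3, \ZZ/3)$; Kudo's theorem then gives $d_3(\beta(z)) = \beta(yy') = \beta(y)y' - y\beta(y')$. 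This pins down $H^*(H_3, \ZZ/3)$ in low degrees.

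Next I would identify the stable classes. The degree one classes $y, y'$ are automatically stable, since any degree one class with finite coefficients classifies a finite \'etale cover and persists after restriction to any nonempty open subset. For degree two, the morphism to the Leray spectral sequence of the $\CC^*$-bundle $(V \setminus \pi^{-1}(\Sigma))/H_3 \to (S(3)\setminus 9\text{ sections})/(\ZZ/3)^2$ annihilates everything in LHSSS filtration $E_\infty^{p,q}$ with $q \geq 2$, because $H^q(\CC^*, \ZZ/3) = 0$ for $q \geq 2$. By Lemma~\ref{lImageH2} the $E_\infty^{2,0}$-contributions (the inflated Bocksteins $\beta(y), \beta(y')$) also die stably. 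What survives in $H^2_{\mathrm{s}}(H_3, \ZZ/3)$ is the $E_\infty^{1,1}$-part, corresponding to cup products of the stable characters $y, y'$ with the fiber class of the $\CC^*$-bundle; these are the classes $Y, Y'$ of the Proposition, nonvanishing as can be checked by an explicit Kummer-theoretic computation of the corresponding symbols in $H^2(\CC(V/H_3), \ZZ/3)$.

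For the vanishing of $H^3_{\mathrm{s}}(H_3, \ZZ/3)$ and the ring relations I would show that every piece of $H^3(H_3, \ZZ/3)$ dies stably. Filtration pieces with $q \geq 2$ die automatically on the fiber side. The $E_\infty^{3,0}$-contribution, inflated from $H^3((\ZZ/3)^2, \ZZ/3)$, dies by the residue argument of Lemma~\ref{lImageH2} extended to degree three: the divisorial decomposition groups on the resolution $\tilde S$ of $S(3)/(\ZZ/3)^2$ are all cyclic subgroups of $(\ZZ/3)^2$, and since $H^n_{\mathrm{s}}(\ZZ/3, F) = 0$ for $n \geq 2$, the residues of such classes at every prime divisor of $\tilde S$ vanish, so they extend to $\tilde S$ and then vanish there because $\tilde S$ is a rational surface with $H^3(\tilde S, \ZZ/3) = 0$. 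The $E_\infty^{2,1}$-contribution reduces via the $\CC^*$-bundle Gysin sequence to products with $H^2$-classes on the base, also killed by Lemma~\ref{lImageH2}. The same arguments extended to $H^4$ handle $YY', Y^2, (Y')^2$. Combining: $yy' = 0$ already in $H^2(H_3, \ZZ/3)$ as the image of $d_2(z)$, the degree-three products $yY, y Y', y'Y, y'Y'$ vanish in $H^3_{\mathrm{s}} = 0$, and the degree-four products vanish in $H^4_{\mathrm{s}} = 0$, establishing the ring structure.

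The main obstacle is proving nonvanishing of the stable classes $Y, Y'$ originating from the $E_\infty^{1,1}$-part: this requires an explicit Kummer-theoretic construction of the corresponding symbols in the Galois cohomology of $\CC(V/H_3)$ and a verification that they are not killed under further stabilization, in contrast with the behavior of the $E_\infty^{2,0}$-inflations.
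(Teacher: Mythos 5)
Your overall strategy is the same as the paper's: compare the Lyndon--Hochschild--Serre spectral sequence of the central extension with the Leray spectral sequence of the $\CC^*$-fibration over (a small open subset of) $(S(3)\setminus 9\text{ sections})/(\ZZ/3)^2$, kill the $q\ge 2$ columns using $H^q(\CC^*,\ZZ/3)=0$, kill the $E^{2,0}$-contribution using Lemma~\ref{lImageH2}, and kill the degree~$\ge 3$ base contributions using the fact that the base is a (rational) surface. Your treatment of the vanishing of $H^3_{\mathrm{s}}$ and of the product relations is correct and in fact spells out details that the paper compresses into a citation of Leary's Theorem~7 together with the triviality of Steenrod operations in stable cohomology.

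The one genuine gap is the point you yourself flag: the nonvanishing of $Y,Y'$ in $H^2_{\mathrm{s}}(H_3,\ZZ/3)$, without which the description of the ring loses its content (if $Y=Y'=0$ then $H^2_{\mathrm{s}}$ would vanish). You propose an explicit Kummer-theoretic computation of symbols in $H^2(\CC(V/H_3),\ZZ/3)$ and leave it as an obstacle. The paper closes this gap by a much softer argument (used explicitly in the proof of Theorem~\ref{tHeisenberg}): by Leary, $Y$ restricts to a nonzero product of degree-one classes on an abelian subgroup $\ZZ/3\oplus\ZZ/3\subset H_3$; degree-one classes are stable, their product is a nonzero element of $H^2_{\mathrm{s}}(\ZZ/3\oplus\ZZ/3,\ZZ/3)$ (the stable cohomology of an elementary abelian group is the exterior algebra on its degree-one part), and restriction commutes with stabilization, so $Y$ is both stable and nonzero stably. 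You should replace the projected Kummer computation by this restriction argument; as written, your proof establishes the upper bounds and relations but not that $H^2_{\mathrm{s}}(H_3,\ZZ/3)\neq 0$.
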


\begin{proof}
We restrict the left hand fibration in the diagram
\begin{center}
\xymatrix{
\CC^{\ast} \simeq \mathrm{B}\ZZ\ar@{^{(}->}[d] \ar[rr]  & & \mathrm{B}(\ZZ/3) \ar@{^{(}->}[d] \\
(V \backslash{\pi^{-1}(\Sigma)})/H_3 \ar[d]^{\pi} \ar[rr] & &  \mathrm{B} H_3 \ar[d]\\
(\PP (V)\backslash \Sigma) / (\ZZ/3)^2 \simeq (S(3) \backslash ( 9 \;\mathrm{sections} ))/(\ZZ/3)^2 \ar[rr] & & \mathrm{B} (\ZZ/3 \oplus \ZZ/3) 
}
\end{center}
to an open subset $U \subset (\PP (V)\backslash \Sigma) / (\ZZ/3)^2$ chosen so small that 
\begin{itemize}
\item[(a)]
The cohomology of $\tilde{S}$ is stabilized on it.
\item[(b)]
It is a $K(\Gamma , 1)$. 
\end{itemize}
Then we look at the induced map of spectral sequences 
\begin{gather*}
E_2^{pq} = H^p ( \ZZ/3\oplus \ZZ/3 , H^q (\ZZ/3, \ZZ/3 )) \implies H^{p+q} (H_3, \ZZ/3) , \\
E_2^{pq} = H^p (\Gamma , H^q (\ZZ , \ZZ/3) ) \implies H^{p+q} (\pi^{-1}(U), \ZZ/3 ) 
\end{gather*}
to find that the image of $H^3 (H_3, \ZZ/3)$ in $H^3 (\pi^{-1}(U), \ZZ/3)$ is zero; note that any element coming from $H^2 (\ZZ/3\oplus \ZZ/3, \ZZ/3)$ extends to $\tilde{S}$, i.e. the map to the cohomology of $U$ factors through the cohomology of $\tilde{S}$. 

The remaining assertions follow from Theorem 7 of \cite{Leary} and the fact that the Steenrod operations are trivial in stable cohomology. 
\end{proof}

\begin{remark}\xlabel{rTezukaYagita}
This result agrees with what is stated in \cite{TezYag}, Prop. 10.2, though the argument here is different and maybe more elementary.
\end{remark}

\section{Stable cohomological dimension of arbitrary Heisenberg groups}\xlabel{sHeisenberg}

Let $H_p$ be the Heisenberg group $H_p$, $p$ some prime number, which is a central extension

\[
0 \to \ZZ/p \to H_p \to \ZZ/p \oplus \ZZ/p \to 0
\]

defined by thinking of elements of $H_p$ as given by matrices
\[
M(a,b,c)=\left( \begin{array}{ccc} 1 & a & c \\ 0 & 1 & b \\ 0 & 0 & 1 \end{array}\right)
\]
with $a,b,c \in \ZZ/p$ and group composition given by matrix multiplication. Let us write temporarily $A=\ZZ/p$, $\hat{A} = \mathrm{Hom}^{\mathrm{gp}} (\ZZ/p, \CC^*) \simeq \ZZ/p$, and the isomorphism $A \simeq \hat{A}$ is given by the assignment

\[
A \ni k \mapsto  \left( \ZZ/p \ni j \mapsto \chi_k (j) = e^{\frac{2\pi i}{p} k j } \in S^1 \subset \CC^* \right) \in \hat{A} .
\]

Let $V$ be the $p$-dimensional complex vector space of complex valued functions on the set $A$. For a pair $(a , \chi ) \in A \times \hat{A}$ one can define the Weyl operator $W_{(a, \chi )}$, a linear self-map from $V$ to $V$, by

\[
(W_{(a, \chi)} f) (u) = \chi (u) f (a + u), \quad f\in V, u \in A 
\]
and the standard $p$-dimensional Schr\"odinger representation $\varrho$ of $H_p$ on $V$ by 

\[
\varrho (M(a,b,c)) = e^{\frac{2\pi i}{p} c}\, W_{(a, \chi_b)} .
\]

Consider the action of $H_p$ on $V \times T$ where $T = \CC^* \times \CC^*$ and $H_p$ acts on $T$ via the quotient $\Gamma := \ZZ/p \times \ZZ/p$, multiplying each coordinate by a $p$-th root of unity. There is an induced action of the quotient $\Gamma$ on $\PP (V) \times T$ and a $\CC^*$-fibration
\begin{eqnarray}\label{fibration}
f : (V- \{ 0 \}  \times T)/H_p \to (\PP (V) \times T)/\Gamma .
\end{eqnarray}
Let $H \subset \PP (V)$ be a general hyperplane and let $\mathcal{H}$ be its $\Gamma$-orbit. Then $\mathcal{H}$ is a generic hyperplane arrangement and $\Omega := \PP(V) - \mathcal{H}$ has fundamental group $\ZZ^{p^2 -1}$. Moreover, 
\[
(\Omega \times T)/\Gamma \subset (\PP (V) \times T)/\Gamma
\]
has a fundamental group $\Pi$ which is an extension (as is seen by looking at the projection $(\Omega \times T)/\Gamma \to T/\Gamma$)
\begin{eqnarray}\label{ExtensionPi}
0 \to \ZZ^{p^2 -1} \to \Pi \to \ZZ \oplus \ZZ \to 0
\end{eqnarray}
where $\ZZ^{p^2 -1}$ is a $\Gamma$-module (hence a $\ZZ \oplus \ZZ$-module via the surjection $\ZZ \oplus \ZZ \twoheadrightarrow \ZZ/p \oplus \ZZ/p$ which is the product of the canonical projections in both factors), equal to
\[
\ZZ [\Gamma ] / \ZZ \left( \sum_{\gamma \in \Gamma} \gamma \right)
\]
since the fundamental group of the fiber is generated by loops around the hyperplanes in $\mathcal{H}$ and the sum of these is trivial. 

\begin{lemma}\xlabel{lNotSemidirect}
The extension (\ref{ExtensionPi}) is not a semi-direct product.
\end{lemma}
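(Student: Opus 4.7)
The plan is as follows.

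The extension \eqref{ExtensionPi} is classified by an element $\bar c \in H^2(\ZZ \oplus \ZZ, M)$. Using the Koszul resolution of the trivial $\ZZ[\ZZ\oplus \ZZ]$-module $\ZZ$, this cohomology equals the coinvariants $M_{\ZZ \oplus \ZZ}$, and since the $\ZZ \oplus \ZZ$-action factors through $\Gamma$, this equals $M_\Gamma$. The augmentation $\ZZ[\Gamma] \to \ZZ$ descends modulo $p^2$ (since the defining relator $\sum_\gamma \gamma \in M$ has augmentation $|\Gamma| = p^2$) to an isomorphism $M_\Gamma \xrightarrow{\sim} \ZZ/p^2\ZZ$; under this identification, $\bar c$ is the image of the commutator $[\tilde t_1, \tilde t_2] \in M$ for any lifts $\tilde t_1, \tilde t_2 \in \Pi$ of the generators of $\ZZ \oplus \ZZ$. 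The goal is to show $\bar c \neq 0$.

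First, I would exploit that the kernel of $\Pi \to \Gamma$, namely $\pi_1(\Omega \times T) = M \oplus \pi_1(T)$, is abelian. Because the $\Gamma$-action on $T = \CC^* \times \CC^*$ by $p$-th roots of unity is homotopic to the identity, $\Gamma$ acts trivially on $\pi_1(T)$. Writing $\tilde t_1^p = m_1 \cdot a_1$ with $m_1 \in M$ and $a_1$ a generator of $\pi_1(T)$, conjugation by $\tilde t_2$ acts as $\gamma_2$ on the $M$-component and trivially on the $\pi_1(T)$-component, so $[\tilde t_2, \tilde t_1^p] = (\gamma_2 - 1) m_1$ in $M$. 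On the other hand, expanding directly in $\Pi$ via the standard commutator identity gives $[\tilde t_1^p, \tilde t_2] = (1 + \gamma_1 + \cdots + \gamma_1^{p-1})c$. Applying the augmentation modulo $p^2$ yields $0 = p\bar c$, so $\bar c$ lies in the subgroup $p\ZZ/p^2\ZZ \cong \ZZ/p\ZZ$ of $\ZZ/p^2\ZZ$.

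The hard part will then be to show that $\bar c$ is actually nonzero in this copy of $\ZZ/p$. One route is to compute the commutator directly, choosing explicit paths in $\Omega$ connecting $\omega_0$ to $\gamma_i \omega_0$ and reading off the augmentation (total winding number around the arrangement, modulo $p^2$) of the resulting loop. A more structural alternative is to compute $H_1((\Omega \times T)/\Gamma, \ZZ) = \Pi^{\mathrm{ab}}$ by the Cartan--Leray spectral sequence for the free $\Gamma$-cover $\Omega \times T \to (\Omega \times T)/\Gamma$, noting that a split extension would force the torsion subgroup of $\Pi^{\mathrm{ab}}$ to be $\ZZ/p^2\ZZ$ while the spectral sequence produces strictly smaller torsion. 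A third route is to interpret $\bar c$ as the inflation along $\ZZ \oplus \ZZ \twoheadrightarrow \Gamma$ of the obstruction in $H^2(\Gamma, M)$ to a section of the Borel fibration $(\Omega \times E\Gamma)/\Gamma \to B\Gamma$, and to observe that $\Omega$ admits no $\Gamma$-fixed point, because the diagonal and shift generators of the Schr\"odinger representation have disjoint eigenvector sets in $\PP(V)$; combined with a Miller-type homotopy-fixed-point statement for maps out of classifying spaces of elementary abelian $p$-groups into finite CW-complexes, this rules out a section and hence a splitting.
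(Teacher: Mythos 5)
Your reduction is correct and worthwhile as far as it goes: $H^2(\ZZ\oplus\ZZ, M)\cong M_{\ZZ\oplus\ZZ}=M_\Gamma\cong\ZZ/p^2$ via the augmentation, the extension class $\bar c$ is the image of the commutator $[\tilde t_1,\tilde t_2]$, and your comparison of $[\tilde t_1^p,\tilde t_2]=(1+\gamma_1+\dots+\gamma_1^{p-1})c$ with $[\tilde t_2,\tilde t_1^p]=(\gamma_2-1)m_1$ does yield $p\bar c=0$, i.e.\ $\bar c\in p\ZZ/p^2\ZZ$. But that is only an upper bound on $\bar c$; the content of the lemma is precisely that $\bar c\neq 0$, and that is the step you explicitly leave open, so the proof is incomplete. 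Note moreover that everything you have used so far (the Koszul identification, the commutator identities, the triviality of the $\Gamma$-action on $\pi_1(T)$) holds for an arbitrary $\Gamma$-invariant arrangement, including situations where the extension does split; the decisive input from the Schr\"odinger action and the genericity of $\mathcal{H}$ has not yet entered your argument.

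Of your three sketches, (a) is viable in principle but is exactly where all the work lies; (b) asserts without computation that the Cartan--Leray spectral sequence ``produces strictly smaller torsion,'' which is the question restated, since the torsion of $\Pi^{\mathrm{ab}}$ is $(\ZZ/p^2)/\langle\bar c\rangle$ and computing it is equivalent to computing $\bar c$; and (c) has a logical gap: the absence of a $\Gamma$-fixed point (or of a section of the Borel fibration) shows that the obstruction class in $H^2(\Gamma,M)$ is nonzero, but $\bar c$ is its inflation along $\ZZ\oplus\ZZ\twoheadrightarrow\Gamma$, and inflation is far from injective here, so nonvanishing upstairs does not give $\bar c\neq 0$. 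For comparison, the paper supplies the missing input geometrically: it uses the $\CC^*$-fibration $((V-\{0\})\times T)/H_p\to(\PP(V)\times T)/\Gamma$, whose total space has fundamental group surjecting onto the nonabelian group $H_p$. A splitting of (\ref{ExtensionPi}) would produce a topological torus $\Sigma\simeq S^1\times S^1$ mapping to $(\Omega\times T)/\Gamma$ as a section over $T/\Gamma$; since $\mathrm{Pic}(\Omega)$ is torsion and $H^2(\Sigma;\ZZ)$ is torsion-free, the $\CC^*$-bundle trivializes over $\Sigma$, so the abelian group $\ZZ^3$ would have to surject onto $H_p$ --- a contradiction. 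To finish along your lines you would need to show that $\bar c$ actually generates $p\ZZ/p^2\ZZ$; your computation is consistent with this but does not establish it.
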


\begin{proof}
Notice first, to get the right picture of the topological situation, that the class of the projective bundle
\[
\omega : (\PP (V) \times T)/\Gamma \to T/\Gamma
\]
in the topological Brauer group of $T/\Gamma \simeq \CC^* \times \CC^*$ is trivial: indeed, the topological Brauer group of any good topological space $X$, say a finite CW-complex, in our case $\CC^* \times \CC^*$, which is homotopy equivalent to $S^1 \times S^1$, is nothing but $H^3_{\mathrm{tors}} (X , \ZZ )$, so this vanishes identically. Since we can find an $S^1 \times S^1$ in the complement of any divisor in $T/\Gamma$, the preceding projective bundle has a section $\Sigma$. However, we claim that there cannot be any such section $\Sigma$ contained in the complement of the hyperplane arrangement $(\Omega \times T)/\Gamma$: indeed, the Picard group of $\PP(V) - \mathcal{H}$ is torsion, hence the $\CC^*$-fibration over $(\Omega \times T)/\Gamma$ corresponds to a torsion line bundle and would restrict trivially to $\Sigma$. Hence the fundamental group $\ZZ^3$ of this restricted $\CC^*$-fibration cannot surject onto the nonabelian group $H_p$.

Note that if we do not remove $\mathcal{H}$, then we can find a $\Sigma$ with a nontrivial $\CC^*$-fibration over it and fundamental group a nontrivial extension of $\ZZ\oplus \ZZ$ by $\ZZ$, surjecting onto $H_p$.
\end{proof}

We need some auxiliary results about stable cohomology of $\Gamma$ with arbitrary coefficients to proceed. 

\begin{proposition}\xlabel{pReduction}
Let $M$ be a finite $\Gamma =\ZZ/p\oplus \ZZ/p$-module annihilated by a power of $p$. Let $c: = \sum_{\gamma \in \Gamma} \gamma \in \ZZ[\Gamma]$ and let $M_c \subset M$ be the submodule consisting of elements annihilated by $c$. Then the natural map
\[
H^2_{\mathrm{s}} (\Gamma , M_c ) \to H^2_{\mathrm{s}} (\Gamma , M)
\]
is surjective.
\end{proposition}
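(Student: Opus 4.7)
The plan is to use the short exact sequence
\[
0 \to M_c \to M \xrightarrow{\pi} N \to 0,\quad N:= M/M_c,
\]
and analyze the associated long exact cohomology sequence, passed through stabilization.

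The key structural observation is that $N$ carries the \emph{trivial} $\Gamma$-action. Indeed, since $\gamma c = c$ in $\ZZ[\Gamma]$ for every $\gamma\in\Gamma$, the image $cM$ is pointwise fixed by $\Gamma$, and multiplication by $c$ factors as $M \twoheadrightarrow N \xrightarrow{\sim} cM \hookrightarrow M$, exhibiting $N$ as isomorphic to the trivial $\Gamma$-submodule $cM$ of $M$. Because this short exact sequence of $\Gamma$-modules induces the analogous short exact sequence of local systems on every $\Gamma$-invariant open $U\subset V$ in a generically free representation $V$, the long exact cohomology sequence is compatible with the stabilization maps $H^*(\Gamma,-) \to \varinjlim_U H^*(U/\Gamma,\widetilde{-})$. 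Consequently, the surjectivity of $H^2_{\mathrm{s}}(\Gamma, M_c) \to H^2_{\mathrm{s}}(\Gamma, M)$ reduces to showing that the image of $\pi_*: H^2(\Gamma, M) \to H^2(\Gamma, N)$ consists entirely of unstable classes, or equivalently, that the connecting map $\partial: H^2(\Gamma, N) \to H^3(\Gamma, M_c)$ is injective on the stable subspace of $H^2(\Gamma, N)$.

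For the trivial $\Gamma$-module $N = \bigoplus_i \ZZ/p^{a_i}$ with $\Gamma = (\ZZ/p)^2$, each summand contributes an $H^*(\Gamma, \ZZ/p^a)$ with the familiar description in terms of degree-one generators $x_1, x_2$ (dual to a basis of $\Gamma$) and their Bocksteins $y_1, y_2$; the stable part of $H^2$ is spanned by the ``Heisenberg'' class $x_1\cup x_2$, while the Bockstein classes $y_i$ are unstable. The nonvanishing of $\partial$ on each such $x_1\cup x_2$ then follows from a Bockstein-type calculation: since the sequence $0\to M_c\to M\to N\to 0$ is genuinely nonsplit (the norm element $c$ acts on $N$ as $|\Gamma|=p^2$ while acting as zero on $M_c$, so the extension class is nontrivial on each cyclic summand of $N$), one computes $\partial(x_1\cup x_2) = y_1\cup x_2 \pm y_2\cup x_1$, a nonzero class in $H^3(\Gamma, M_c)$.

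The hard part will be carrying out this Bockstein-type obstruction computation uniformly for arbitrary finite $\Gamma$-modules $M$, especially when $N$ is noncyclic and the extension does not split compatibly with the cyclic decomposition of $N$, so that cross-terms between summands must be carefully controlled. A secondary technical point is the careful justification that the long exact cohomology sequence descends cleanly to the stable quotient, which ultimately rests on the geometric description of stable classes as pullbacks from $H^*(\Pi,-)$ via the fundamental group $\Pi$ of $(\Omega\times T)/\Gamma$ constructed earlier in the section.
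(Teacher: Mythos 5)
Your approach via the coefficient sequence $0 \to M_c \to M \xrightarrow{\pi} N \to 0$, with $N = M/M_c \cong cM$ a trivial module (that observation is correct and pleasant), is genuinely different from the paper's, but as it stands it has two real gaps. First, the reduction step is not valid: surjectivity of $H^2_{\mathrm{s}}(\Gamma, M_c) \to H^2_{\mathrm{s}}(\Gamma, M)$ does not follow from knowing that $\mathrm{im}(\pi_*)\subset H^2(\Gamma,N)$ consists of unstable classes. Stable cohomology is a quotient of group cohomology, namely the image in $\mathrm{lim}_U H^*(U/\Gamma,\tilde{\:\cdot\:})$, and long exact sequences do not descend to such images. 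Concretely: if $\alpha\in H^2(\Gamma,M)$ has $\pi_*\alpha$ unstable, then for $U$ small enough the image $\bar\alpha\in H^2(U/\Gamma,\tilde M)$ lifts to some $\bar\beta\in H^2(U/\Gamma,\widetilde{M_c})$, but nothing guarantees that $\bar\beta$ is a \emph{stable} class, i.e.\ lies in the image of $H^2(\Gamma,M_c)$ --- and that is exactly what surjectivity requires. (Nor is ``$\mathrm{im}(\pi_*)$ unstable'' equivalent to ``$\partial$ injective on the stable subspace'': the unstable classes form a subgroup with no canonical complement, so ``the stable subspace'' is not well defined.) Second, the computation you defer as ``the hard part'' is not a routine Bockstein calculation: $M_c$ is in general a nontrivial $\Gamma$-module, so the proposed identity $\partial(x_1\cup x_2)=y_1\cup x_2\pm y_2\cup x_1$ does not typecheck without specifying a pairing landing in $M_c$; moreover for summands $\ZZ/p^a$ of $N$ with $a\ge 2$ the literal cup product $x_1\cup x_2$ under the multiplication pairing vanishes, since $\mathrm{Hom}(\Gamma,\ZZ/p^a)$ takes values in $p^{a-1}\ZZ/p^a$, so the stable part of $H^2(\Gamma,N)$ is not spanned by such products. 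Even granting $\partial\neq 0$ on suitable generators, you would still need $\ker\partial$ to be contained in the unstable part, which forces you to control $\partial$ on the Bockstein-type classes and all cross-terms as well.

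For contrast, the paper's proof avoids all of this by a more direct mechanism: stabilization for $\Gamma=(\ZZ/p)^2$ factors through the pullback along $\ZZ\oplus\ZZ\twoheadrightarrow\Gamma$, so only the image of $H^2(\Gamma,M)$ in $H^2(\ZZ\oplus\ZZ,M)=H^1(\ZZ,H^1(\ZZ,M))$ matters; in the Lyndon--Hochschild--Serre spectral sequence only the term $H^1(C_1,H^1(C_2,M))$ can survive this passage, and the explicit cocycle description $H^1(\ZZ/p,F)=F_t/[\ZZ/p,F]$ shows that every class there is represented by elements of $M_{t_2}$, which automatically lie in $M_c$ because $c=t_1t_2$. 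If you want to salvage your route, you would need at minimum a compatibility statement between the long exact coefficient sequence and stabilization strong enough to produce a stable lift of $\bar\beta$; proving that is essentially as hard as the original statement.
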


We first remark

\begin{lemma}\xlabel{lReduction}
Let $F$ be a finite $\ZZ/p$ module, $t := \sum_{g\in \ZZ/p} g \in \ZZ[\ZZ/p]$, and $F_t \subset F$ the submodule annihilated by $t$. Then the natural map
\[
H^1 (\ZZ/p, F_t ) \to H^1 (\ZZ/p, F)
\]
is surjective.
\end{lemma}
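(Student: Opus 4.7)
The plan is to reduce the statement to the explicit description of cohomology afforded by the standard periodic resolution of a cyclic group. Fix a generator $\sigma$ of $\ZZ/p$; then for any $\ZZ/p$-module $A$ one has
\[
H^1(\ZZ/p, A) \;=\; \ker\bigl(t\colon A \to A\bigr)\big/(\sigma-1)A,
\]
where $t = 1 + \sigma + \cdots + \sigma^{p-1}$ is the element from the statement. Since $t$ is central in $\ZZ[\ZZ/p]$ (the group being abelian), the subset $F_t$ is actually a $\ZZ/p$-submodule of $F$, and $(\sigma-1)F_t \subset F_t$, so this formula makes sense for both $F$ and $F_t$.

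Next I would evaluate both sides with this description. For $A = F$, the kernel of $t$ on $F$ is by definition $F_t$, so
\[
H^1(\ZZ/p, F) \;=\; F_t\big/(\sigma-1)F.
\]
For $A = F_t$, the operator $t$ vanishes identically, so every element of $F_t$ lies in its kernel and
\[
H^1(\ZZ/p, F_t) \;=\; F_t\big/(\sigma-1)F_t.
\]
Under these identifications the natural map $H^1(\ZZ/p, F_t) \to H^1(\ZZ/p, F)$ induced by inclusion is nothing but the canonical projection $F_t/(\sigma-1)F_t \twoheadrightarrow F_t/(\sigma-1)F$, which is manifestly surjective since $(\sigma-1)F_t \subset (\sigma-1)F$.

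There is no genuine obstacle here: the content of the lemma is really the tautological observation that every $H^1$-class of $F$ already admits a representative in $F_t$. The only small point to verify is the functoriality of the identification, namely that the inclusion $F_t \hookrightarrow F$ really does correspond, in the $\ker(t)/(\sigma-1)(\cdot)$ presentation, to the elementary quotient map described above; this is automatic from the naturality of the periodic resolution and does not require a separate argument.
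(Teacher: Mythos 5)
Your proof is correct and is essentially the paper's argument: the paper derives the same identification $H^1(\ZZ/p,F)=F_t/(\sigma-1)F$ via crossed homomorphisms (a derivation is determined by the image $m$ of a generator, constrained by $t\cdot m=0$) rather than via the periodic resolution, and then concludes exactly as you do that the map is the evident surjection $F_t/(\sigma-1)F_t\twoheadrightarrow F_t/(\sigma-1)F$.
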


\begin{proof}
Recall that for any group $G$ and $G$-module $M$, $H^1 (G, M)$ can be interpreted as derivations $f : G \to M$, i.e. maps $f$ from $G$ to $M$ satisfying
\[
f (gh) = f(g) + g f(h) \quad \forall g, h \in G ,
\]
modulo principal derivations, which are those mapping $g\in G$ to $(1-g)m$ for some fixed $m\in M$. Now a derivation $f : \ZZ/p \to F$ is determined by the image of a generator $x$ of $\ZZ/p$, $f(x) =m$ say, but since $x^p = \mathrm{id}$, not every $m\in F$ can occur: since 
\[
0 = f (x^p) = (1 + x + \dots + x^{p-1}) m = t \cdot m
\]
only those $m$ which lie in $F_t$ can be potential images of $x$, but this is the only restriction. Hence 
\[
H^1 (\ZZ/p, F) = F_t / [ \ZZ/p, F] .
\]
The natural map $H^1 (\ZZ/p, F_t) \to H^1 (\ZZ/p, F)$ is nothing other than the map
\[
F_t / [\ZZ/p , F_t] \to F_t / [\ZZ/p, F]
\]
which is visibly surjective.
\end{proof}

Now we turn to the 

\begin{proof}(of Proposition \ref{pReduction})
The stabilization morphism $H^2 (\ZZ/p \oplus \ZZ/p , F ) \to H^2_{\mathrm{s}} (\ZZ/p \oplus \ZZ/p, F)$ factors over the natural map
\[
H^2 (\ZZ/p \oplus \ZZ/p, F ) \to H^2 (\ZZ \oplus \ZZ , F) = H^1 (\ZZ , H^1 (\ZZ, F))
\]
where for the last isomorphism we use the Lyndon-Hochschild-Serre spectral sequence applied to the trivial extension $0 \to \ZZ \to \ZZ \oplus \ZZ \to \ZZ \to 0$. Denote by $C_1, C_2$ the two coordinate copies of $\ZZ/p$ in $\Gamma = \ZZ/p \oplus \ZZ/p$.

{\textbf{Claim}:} The natural map
\[
H^1 (C_1, H^1 (C_2, F_c)) \to H^1 (C_1, H^1 (C_2, F))
\]
is surjective. 

Let us denote by $t_1$ resp. $t_2$ the elements in $\ZZ[ C_i]$, $i=1$ resp. $=2$, which are the sums of all the elements in the group. Then by Lemma \ref{lReduction} we have
that $H^1 (C_1, H^1 (C_2, F))$ is equal to 
\[
H^1 (C_1, F_{t_2}/[C_2, F]) = \{ x \in F_{t_2}/[C_2, F] | t_1 \cdot x = 0 \} / [ C_1, F_{t_2}/[C_2, F] ] .
\]
In other words, elements in this group are given by elements in $F$ which are annihilated by $t_2$ and mapped by multiplication by $t_1$ into $[C_2, F]$, but considered modulo elements in a certain commutator submodule. But every element in $F_{t_2}$ is automatically in $F_c$ since
\[
c = t_1 \cdot t_2 \in \ZZ[\Gamma] , 
\]
hence the Claim follows. 

Now $H^1 (\ZZ/p, H^1 (\ZZ/p, F))$ is in fact a subspace of $H^2 (\ZZ/p \oplus \ZZ/p, F)$ (the assertion of course with $F$ replaced by $F_c$) which surjects onto $H^2_{\mathrm{s}} (\ZZ/p \oplus \ZZ/p, F)$: the fact that it is a subspace follows by looking at the Hochschild-Lyndon-Serre spectral sequence 
\[
E_2^{p,q} = H^p (C_1, H^q (C_2, F)) \implies H^2 (\ZZ/p\oplus \ZZ/p, F)
\]
and remarking that the differential
\[
d_2 : H^1 (C_1, H^1 (C_2, F)) \to H^3 (C_1, H^0 (C_2, F))
\]
is zero because the cohomology of the base $H^3 (C_1, H^0 (C_2, F))$ injects into the cohomology of the total space, since the group extension under consideration is trivial, hence has a section. The terms $H^2 (C_1, H^0 (C_2, F))$ and $H^0 (C_1, H^2 (C_2, F))$ of the $E_2$-page are already mapped to zero under the morphism of spectral sequences to 
\[
E_2^{p,q} = H^p (\ZZ, H^q (\ZZ, F)) \implies H^2 (\ZZ\oplus \ZZ, F)
\]
and, as remarked above, the stabilization morphism factors over $H^2 (\ZZ\oplus \ZZ, F)$. Hence $H^1 (\ZZ/p, H^1 (\ZZ/p, F))$ surjects onto $H^2_{\mathrm{s}} (\ZZ/p \oplus \ZZ/p, F)$.
\end{proof}

\begin{definition}\xlabel{dBasicModules}
Let $M$ be a module for $\Gamma$ (or $\ZZÊ[\Gamma]$, which amounts to the same) which is finite, annihilated by some power of $p$, and annihilated by $c = \sum_{\gamma \in \Gamma} \gamma \in \ZZ [\Gamma ]$. 
We call such an $M$ a \emph{basic module} for $\Gamma$. 
\end{definition}
Hence, if $M$ is a basic $\Gamma$-module, then any map
\[
\ZZ[\Gamma] \to M
\]
onto a one generator submodule factors over $\ZZ[\Gamma]/(c)$. The next Lemma won't be needed for the derivation of the main result, but has some independent interest.

\begin{lemma}\xlabel{lTrivialityOfDiagonal}
If $M$ is a rank one $\ZZ[\Gamma]$-module isomorphic to $(\ZZ/p)^N$ as an abelian group for some $N$ and not free as a $\ZZ/p [\Gamma]$-module, then 
\[
\left( \sum_{\gamma \in \Gamma} \gamma \right) \cdot m
\]
is trivial in $M$, hence $M$ is basic.
\end{lemma}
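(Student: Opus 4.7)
The plan is to recast the statement as a claim about ideals in a commutative local Artinian group algebra, where it becomes transparent. Since $M \cong (\ZZ/p)^N$ as an abelian group, $M$ is killed by $p$, hence is naturally a module over $\ZZ/p[\Gamma]$. Being rank one over $\ZZ[\Gamma]$, it is also cyclic over $\ZZ/p[\Gamma]$, so one can write $M \cong \ZZ/p[\Gamma]/I$ for some ideal $I \subset \ZZ/p[\Gamma]$. The hypothesis that $M$ is not free over $\ZZ/p[\Gamma]$ means exactly that $I \neq 0$, and the conclusion $c \cdot m = 0$ translates into the containment $c \in I$. Thus the lemma reduces to showing that every nonzero ideal of $\ZZ/p[\Gamma]$ contains $c$.

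Next I would analyze the structure of the group algebra. Choose generators $\gamma_1, \gamma_2$ of $\Gamma$ and set $x = \gamma_1 - 1$, $y = \gamma_2 - 1$. Since $(1+x)^p = 1 + x^p$ in characteristic $p$ and $\gamma_i^p = 1$, we get $x^p = y^p = 0$, yielding an isomorphism $\ZZ/p[\Gamma] \cong \ZZ/p[x,y]/(x^p, y^p)$. This is a local Artinian ring with maximal ideal $\mathfrak{m} = (x,y)$. The key computation is to identify $c$ in the coordinates $x, y$: using $c = \bigl(\sum_{i=0}^{p-1}(1+x)^i\bigr)\bigl(\sum_{j=0}^{p-1}(1+y)^j\bigr)$, the hockey-stick identity $\sum_{i=k}^{p-1}\binom{i}{k} = \binom{p}{k+1}$ together with the vanishing of $\binom{p}{j}$ mod $p$ for $1 \le j \le p-1$ collapses each factor to $x^{p-1}$ and $y^{p-1}$ respectively. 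Hence $c = x^{p-1}y^{p-1}$.

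Finally, I would identify $c$ with the socle of $\ZZ/p[\Gamma]$ and invoke a standard fact about Gorenstein zero-dimensional rings. A direct check on monomials $x^i y^j$ shows that an element annihilated by both $x$ and $y$ must be a scalar multiple of $x^{p-1} y^{p-1}$, so the socle is one-dimensional and spanned by $c$. In any local Artinian ring with simple socle, every nonzero ideal contains the socle: if $I \neq 0$, the descending chain $I \supseteq \mathfrak{m} I \supseteq \mathfrak{m}^2 I \supseteq \cdots$ terminates at $0$ by Artinianness, and the last nonzero term is annihilated by $\mathfrak{m}$, hence lies in the socle, hence equals it. So $c \in I$, and therefore $c \cdot M = 0$, which also means $M$ is basic in the sense of Definition \ref{dBasicModules}.

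The only step requiring genuine calculation is the identification $c = x^{p-1} y^{p-1}$; the rest is formal commutative algebra. The main potential pitfall is verifying that one really may pass to a cyclic $\ZZ/p[\Gamma]$-module description — i.e., that ``rank one'' together with the $\ZZ/p$-structure on $M$ suffices — but this is immediate once one notes that a $\ZZ[\Gamma]$-generator for $M$ is automatically a $\ZZ/p[\Gamma]$-generator.
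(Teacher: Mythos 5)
Your proof is correct and follows essentially the same route as the paper: pass to $\ZZ/p[\Gamma]\cong\ZZ/p[x,y]/(x^p,y^p)$ and identify $c=\sum_{\gamma\in\Gamma}\gamma$ with the top monomial $x^{p-1}y^{p-1}$. The only (cosmetic) difference is in the final step: where you invoke the one-dimensionality of the socle of this local Artinian ring to conclude that every nonzero ideal contains $c$, the paper gets the same conclusion by multiplying a nonzero element of the kernel ideal, at its lexicographically minimal monomial $x^{i_0}y^{j_0}$, by $x^{p-1-i_0}y^{p-1-j_0}$ --- the two arguments are interchangeable.
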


\begin{proof}
We calculate in $\ZZ/p [\Gamma]$. Write $x := 1 - g_1, y:= 1- g_2 \in \ZZ/p [\Gamma]$, where $g_1$, $g_2$ are generators of the two coordinate copies of $\ZZ/p$ in $\Gamma = \ZZ/p \times \ZZ/p$. Then every $z \in \ZZ/p [\Gamma]$ can be written as
\[
z = \sum_{1 \le i, j \le p-1} a_{ij} x^iy^j, \quad a_{ij} \in \ZZ/p .  
\]
Since $M$ is not free as a $\ZZ/p [\Gamma]$-module, there will be a nonzero such $z$ in the kernel of the surjection $\ZZ/p [\Gamma] \twoheadrightarrow M$ defined by the generator $m$. The kernel being a left-ideal (we consider $M$ as a left $\ZZ/p[\Gamma]$-module), we see that also $x^{p-1}y^{p-1}$ is in the kernel (for this, let $x^{i_0}y^{j_0}$ be the monomial occurring in $z$ with nonzero coefficient which has lexicographically smallest $(i, j)$, and multiply $z$ by $x^{p-1-i_0}y^{p-1-j_0}$, and note that $x^p = y^p =0$).
Now 
\[
x^{p-1}y^{p-1} = \left ( \sum_{i=1}^{p-1} (-1)^i {p-1 \choose i} g_1 \right) \left ( \sum_{i=j}^{p-1} (-1)^j {p-1 \choose j} g_2 \right)
\]
and $(-1)^i {p-1 \choose i} =1$ in $\ZZ/p$, so $x^{p-1}y^{p-1} = \sum_{\gamma \in \Gamma} \gamma$.
\end{proof}

\begin{proposition}\xlabel{pImageH2Basic}
If $M$ is a basic $\Gamma$-module, then the image of $H^2 (\Gamma, M)$ is trivial in $H^2 ( (\Omega \times T)/\Gamma, \tilde{M})$ where $\tilde{M}$ is the local system on $(\Omega \times T)/\Gamma$ induced by $M$.
\end{proposition}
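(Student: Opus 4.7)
First, I would factor the pullback as
$$H^2(\Gamma, M) \longrightarrow H^2(\ZZ \oplus \ZZ, M) = M_\Gamma \longrightarrow H^2((\Omega \times T)/\Gamma, \tilde{M}),$$
using that the classifying map $(\Omega \times T)/\Gamma \to \mathrm{B}\Gamma$ factors through $T/\Gamma \simeq \mathrm{K}(\ZZ \oplus \ZZ, 1)$; this is immediate from the definition of $\Pi$ in (\ref{ExtensionPi}), as the surjection $\Pi \twoheadrightarrow \Gamma$ factors through $\Pi \twoheadrightarrow \ZZ \oplus \ZZ$. It therefore suffices to show that the second map annihilates the image of the first.

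Next, I would apply the Leray spectral sequence of the fibration $\Omega \to (\Omega \times T)/\Gamma \to T/\Gamma$. The kernel of the edge map $E_2^{2,0} = M_\Gamma \to H^2((\Omega \times T)/\Gamma, \tilde{M})$ contains the image of the transgression
$$d_2 : H^0(T/\Gamma, H^1(\Omega, M)) \longrightarrow M_\Gamma,$$
which is cup product with the extension class $[\Pi] \in H^2(\ZZ \oplus \ZZ, A) = A_\Gamma \cong \ZZ/p^2$ of the sequence (\ref{ExtensionPi}). Basicness of $M$ identifies $\mathrm{Hom}_\Gamma(A, M)$ with $M$ via $f \mapsto f(\bar{1})$ (the equality $c \cdot m = 0$ is precisely what makes such an $f$ well-defined on $A = \ZZ[\Gamma]/\ZZ c$), so $d_2$ becomes the homomorphism $m \mapsto m \cdot [\Pi]$ from $M$ to $M_\Gamma$.

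Finally, I would show that the image of $H^2(\Gamma, M) \to M_\Gamma$ is contained in $[\Pi] \cdot M$. Using the Hochschild--Serre spectral sequence for $\Gamma = C_1 \times C_2 = (\ZZ/p)^2$ (as in the proof of Proposition \ref{pReduction}), this image can be computed explicitly as coming from the subspace $H^1(C_1, H^1(C_2, M)) \subseteq H^2(\Gamma, M)$; already for $M = \ZZ/p$ with trivial action it exhausts all of $M_\Gamma \cong \ZZ/p$. Hence the containment forces $[\Pi]$ to be a unit in $\ZZ/p^2$, a strengthening of Lemma \ref{lNotSemidirect}, which only yields $[\Pi] \neq 0$.

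The main obstacle will be establishing this unit property. The plan is to argue by contradiction: if $[\Pi] \in p \cdot \ZZ/p^2$, the reduction $\Pi/pA \twoheadrightarrow \ZZ \oplus \ZZ$ splits, and transporting the section argument of Lemma \ref{lNotSemidirect} to this mod-$p$ splitting---pulling back the $\CC^*$-fibration (\ref{fibration}) along a resulting mod-$p$ section of the projective bundle $\omega$---produces a fundamental group that is abelian after reduction modulo $p$, contradicting the non-abelianness of $H_p$. Once $[\Pi]$ is known to be a unit, $d_2$ surjects onto $M_\Gamma$ and the proposition follows.
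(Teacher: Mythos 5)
Your reduction is, in itself, a clean and correct way to organize the problem: the pullback does factor through $H^2(\ZZ\oplus\ZZ,M)\cong M_\Gamma$; the kernel of $M_\Gamma\to H^2((\Omega\times T)/\Gamma,\tilde M)$ is exactly the image of the transgression $\mathrm{Hom}_\Gamma(A,M)\to M_\Gamma$ (five-term exact sequence for (\ref{ExtensionPi}), plus injectivity of $H^2(\Pi,M)\to H^2((\Omega\times T)/\Gamma,\tilde M)$); basicness gives $\mathrm{Hom}_\Gamma(A,M)\cong M$, where $A=\ZZ[\Gamma]/\ZZ c$; and so everything hinges on the position of $[\Pi]$ in $H^2(\ZZ\oplus\ZZ,A)\cong A_\Gamma\cong\ZZ/p^2$. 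You are also right that Lemma \ref{lNotSemidirect} only yields $[\Pi]\neq 0$ while your argument needs $[\Pi]$ to be a unit. This is where the proposal breaks down: $[\Pi]$ is \emph{not} a unit. The bundle $(\Omega\times T)/\Gamma\to T/\Gamma$ is associated to the principal $\Gamma$-bundle $T\to T/\Gamma$, so the extension (\ref{ExtensionPi}) is the pullback along $\ZZ\oplus\ZZ\twoheadrightarrow\Gamma$ of the orbifold extension $1\to A\to\pi_1(\Omega\times_\Gamma E\Gamma)\to\Gamma\to 1$; and from $0\to\ZZ\xrightarrow{\cdot c}\ZZ[\Gamma]\to A\to 0$ together with $H^i(\Gamma,\ZZ[\Gamma])=0$ for $i>0$ one gets $H^2(\Gamma,A)\cong H^3(\Gamma,\ZZ)\cong\ZZ/p$. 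Hence $p[\Pi]=0$ in $\ZZ/p^2$, i.e. $[\Pi]\in p\cdot\ZZ/p^2$ --- the exact opposite of what you need. Accordingly, your contradiction argument cannot close: the mod-$p$ splitting of $\Pi/pA\to\ZZ\oplus\ZZ$ genuinely exists, but it is only a group-theoretic section, not a geometric section of the projective bundle $\omega$ contained in the arrangement complement; the $\CC^{\ast}$-fibration over the corresponding torus has Chern class prime to $p$, so its fundamental group is a nontrivial extension of $\ZZ\oplus\ZZ$ by $\ZZ$ which does surject onto $H_p$ (this is precisely the situation of the last sentence of the proof of Lemma \ref{lNotSemidirect}), and no contradiction with nonabelianness arises.

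Note what this does to your own scheme: with $[\Pi]\in p\cdot\ZZ/p^2$ the transgression vanishes on $M=\ZZ/p$, while, as you observe, $H^2(\Gamma,\ZZ/p)$ already surjects onto $M_\Gamma\cong\ZZ/p$ (via $x_1\cup x_2$); so your reduction, carried out with the correct value of $[\Pi]$, produces a \emph{nonzero} image of $H^2(\Gamma,\ZZ/p)$ in $H^2((\Omega\times T)/\Gamma,\ZZ/p)$ rather than a vanishing one. For comparison, the paper's proof looks different in form --- it splits the pulled-back extension by building a morphism of extensions from (\ref{ExtensionPi}) onto $0\to M\to H\to\Gamma\to 0$ sending $[\tilde a,\tilde b]$ to $[\tilde\alpha,\tilde\beta]$ --- but the step that makes that map well defined is the assertion that $[\tilde a,\tilde b]$ generates $\ZZ/(p^k)[\Gamma]/(c)$ as a $\Gamma$-module, which by Nakayama is equivalent to $[\Pi]\notin pA_\Gamma$, i.e. to exactly the unit property you could not establish; the paper justifies it only by appeal to Lemma \ref{lNotSemidirect}, which gives $[\Pi]\neq 0$ but not $[\Pi]\notin pA_\Gamma$. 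So you have correctly isolated the crux of the matter, but the missing step is not supplied by your strategy, and the computation above indicates it cannot be supplied in this form.
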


\begin{proof}
We have to show that given a nontrivial extension in $H^2 (\Gamma, M)$:
\[
0 \to M \to H \to \Gamma \to 0
\]
(where by definition $\Gamma$ acts on the abelian normal subgroup $M$ of $H$ via conjugation and this is the given action of $\Gamma$ on $M$), this extension becomes trivial when we pull it back to $\Pi$ via the surjection $\Pi \twoheadrightarrow \ZZ \oplus \ZZ \twoheadrightarrow \ZZ/p \oplus \ZZ/p$:
\[
\xymatrix{
0 \ar[r] & M \ar[r]\ar@{=}[d]  & H_{\Pi} \ar[r]\ar[d] & \Pi \ar[r]\ar[d]\ar@{-->>}[ld] & 0\\
0 \ar[r] & M \ar[r]  & H \ar[r] & \Gamma \ar[r] & 0
}
\]
That the upper extension splits (i.e., is a semi-direct product) is equivalent to the existence of a dashed arrow as shown. In fact, given such an arrow, we get a section of the projection $H_{\Pi} \to \Pi$ since $H_{\Pi}$ is a fiber product of $H$ and $\Pi$ over $\Gamma$; and given a section, we get the required dashed arrow by composing with $H_{\Pi} \to H$.

We claim that we can form some commutative diagram
\[
\xymatrix{
0 \ar[r] & \ZZ^{p^2-1} \ar[r]\ar@{-->>}[d]^{\pi}  & \Pi \ar[r]\ar@{-->>}[d]^{\varphi} & \ZZ \oplus \ZZ \ar[r]\ar@{->>}[d] & 0\\
0 \ar[r] & M \ar[r]  & H \ar[r] & \ZZ/p \oplus \ZZ/p \ar[r] & 0
}
\]
with the right hand vertical arrow $\ZZ \oplus \ZZ \twoheadrightarrow \ZZ/p \oplus \ZZ/p$ the product of the two canonical projections $\ZZ\twoheadrightarrow \ZZ/p$. In other words, we have to justify the existence of the dashed arrows. 

To do this, suppose that $M$ is annihilated by $p^k$; then we know that every submodule of $M$ generated by one element is the image of a map
\[
\ZZ/(p^k)[\Gamma ]/(c) \to M
\]
since $M$ is basic. The reduction map $\ZZ [G]/(c) \twoheadrightarrow \ZZ/(p^k)[G]/(c)$ induces a morphism of extensions

\[
\xymatrix{
0 \ar[r] & \ZZ [\Gamma ]/(c) \ar[r]\ar[d]  & \Pi \ar[r]\ar[d] & \ZZ \oplus \ZZ \ar[r]\ar@{=}[d] & 0\\
0 \ar[r] & \ZZ/(p^k) [\Gamma ]/(c) \ar[r]  & \Pi' \ar[r]\ & \ZZ \oplus \ZZ \ar[r] & 0}
\]

and it suffices to define maps $\varphi'$, $\pi'$, making

\[
\xymatrix{
0 \ar[r] & \ZZ/(p^k) [\Gamma ]/(c) \ar[r]\ar@{-->>}[d]^{\pi'}  & \Pi' \ar[r]\ar@{-->>}[d]^{\varphi'} & \ZZ \oplus \ZZ \ar[r]\ar@{->>}[d] & 0\\
0 \ar[r] & M \ar[r]  & H \ar[r] & \ZZ/p \oplus \ZZ/p \ar[r] & 0
}
\]
commutative.

We want to define the map $\varphi'$ in the following way: let $a= (1,0), b= (0,1) \in \ZZ\oplus \ZZ$, and $\alpha = ( \bar{1}, \bar{0}), \beta = (\bar{0}, \bar{1}) \in \ZZ/p \oplus \ZZ/p = \Gamma$. Then we claim that we get a \emph{well-defined homomorphism} $\varphi'$ by the rule
\[
\tilde{a} \mapsto \tilde{\alpha}, \quad \tilde{b} \mapsto \tilde{\beta }
\]
where $\tilde{a}, \tilde{b}$ are arbitrary lifts of $a,b$ to $\Pi'$ and $\tilde{\alpha},\tilde{\beta }$ equally arbitrary lifts of $\alpha, \beta$ to $H$. In fact, we will show that then $[\tilde{a}, \tilde{b}]$ is a generator of the $\Gamma$-module $\ZZ/(p^k) [\Gamma ]/(c)$ whence the map $\pi'$ is the one mapping this generator to $[\tilde{\alpha}, \tilde{\beta}]$ which generates a rank $1$ submodule of $M$. Then $\pi'$ and with it also $\varphi'$ will be well-defined since  $\ZZ/(p^k) [\Gamma ]/(c)$ can be mapped in a well-defined way into any basic module annihilated by $p^k$ by assigning the image of a generator arbitrarily.

We now use that the extension (\ref{ExtensionPi}) is not a semi-direct product by Lemma \ref{lNotSemidirect} to show that  $\gamma:= [\tilde{a}, \tilde{b}]$ is a generator of the $\Gamma$-module $F:=\ZZ/(p^k) [\Gamma ]/(c)$: in fact, by hypothesis, the class $\overline{\gamma}$ in $\bar{F}:=F/pF$ maps to  a generator of $\bar{F}/[\Gamma , \bar{F}]\simeq \ZZ/p$. Hence, since the action of $\Gamma$ on $\bar{F}$ is nilpotent and this has a filtration by $\Gamma$-invariant subspaces with $\bar{F}/[\Gamma , \bar{F}]\simeq \ZZ/p$ the maximal quotient on which the $\Gamma$-action is trivial, we get that $\overline{\gamma}$ generates $\bar{F}$. But then $\gamma$ must generate $F$ because any proper submodule of $F$ is mapped into a proper submodule of $\bar{F}$ under the reduction map. 
\end{proof}

\begin{corollary}\xlabel{cAnyModule}
If $M$ is any finite $\Gamma$-module, the image of $H^* (\Gamma, M)$ (or what amounts to the same thing, $H^*_{\mathrm{s}} (\Gamma , M)$) in $H^2_{\mathrm{s}} ( (\Omega \times T)/\Gamma, \tilde{M})$ is zero.
\end{corollary}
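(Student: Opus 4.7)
The plan is to derive this corollary from the preceding two propositions by a straightforward naturality argument, using Proposition \ref{pReduction} to reduce the general case to basic modules, and then applying Proposition \ref{pImageH2Basic}. First I would observe that since $\Gamma$ is a $p$-group, $H^i(\Gamma, M)$ vanishes for $i > 0$ on the prime-to-$p$ part of $M$; splitting $M$ into its primary components, it suffices to treat the case when $M$ is annihilated by some power of $p$.

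For such $M$, the submodule $M_c \subset M$ of elements annihilated by $c = \sum_{\gamma\in\Gamma}\gamma$ is a basic $\Gamma$-module in the sense of Definition \ref{dBasicModules}. The inclusion $M_c \hookrightarrow M$ is $\Gamma$-equivariant, so it induces a morphism of local systems $\widetilde{M_c} \to \tilde{M}$ on $(\Omega\times T)/\Gamma$, and hence a commutative square
\[
\xymatrix{
H^2(\Gamma, M_c) \ar[r]\ar[d] & H^2_{\mathrm{s}}((\Omega\times T)/\Gamma, \widetilde{M_c}) \ar[d] \\
H^2(\Gamma, M) \ar[r] & H^2_{\mathrm{s}}((\Omega\times T)/\Gamma, \tilde{M}).
}
\]
Proposition \ref{pImageH2Basic} (applied to $M_c$) says the top horizontal arrow is zero, since the image vanishes already in the unstable cohomology $H^2((\Omega\times T)/\Gamma, \widetilde{M_c})$ of which the stable version is a quotient.

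It remains to argue that every class in the image of $H^2(\Gamma, M) \to H^2_{\mathrm{s}}((\Omega\times T)/\Gamma, \tilde{M})$ is represented by some $\alpha' \in H^2(\Gamma, M)$ which lies in the image of the left vertical arrow $H^2(\Gamma, M_c) \to H^2(\Gamma, M)$. Given $\alpha \in H^2(\Gamma, M)$, its image in $H^2_{\mathrm{s}}((\Omega\times T)/\Gamma, \tilde M)$ depends only on the stable class $\alpha_s \in H^2_{\mathrm{s}}(\Gamma, M)$. By Proposition \ref{pReduction}, the natural map $H^2_{\mathrm{s}}(\Gamma, M_c) \twoheadrightarrow H^2_{\mathrm{s}}(\Gamma, M)$ is surjective, so $\alpha_s$ is the image of some stable class lifting to a genuine class $\beta \in H^2(\Gamma, M_c)$. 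Replacing $\alpha$ by the image $\alpha'$ of $\beta$ in $H^2(\Gamma, M)$ does not change the image in the stable cohomology of $(\Omega\times T)/\Gamma$, and by commutativity of the square this image equals the image of $\beta$ via the upper right corner, which is zero.

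The only mildly delicate point is the reduction step in the last paragraph --- i.e., verifying that the surjectivity in $H^2_{\mathrm{s}}$ granted by Proposition \ref{pReduction} suffices (since one cannot in general lift along $H^2(\Gamma, M_c) \to H^2(\Gamma, M)$, only modulo $I_{\mathrm{unstable}}$) --- but this is harmless because we only ever evaluate on the target, which is already stabilized.
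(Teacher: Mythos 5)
Your argument is correct and is essentially the paper's own proof, which consists of the single sentence ``Combine Proposition \ref{pReduction} and Proposition \ref{pImageH2Basic}''; your write-up supplies exactly the intended combination, namely the reduction to the $p$-primary part, the naturality square for $M_c\hookrightarrow M$, and the use of the stable surjectivity from Proposition \ref{pReduction}. The point you flag at the end --- that the image of a class in $H^2_{\mathrm{s}}((\Omega\times T)/\Gamma,\tilde M)$ depends only on its stable class in $H^2_{\mathrm{s}}(\Gamma,M)$, since $(\Omega\times T)$ is a free $\Gamma$-variety and stabilization on it factors through $H^*_{\mathrm{s}}(\Gamma,-)$ --- is indeed the step the paper leaves implicit, and it holds by the standard comparison with a linear representation.
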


\begin{proof}
Combine Proposition \ref{pReduction} and Proposition \ref{pImageH2Basic}. 
\end{proof}

\begin{corollary}\xlabel{cStableBasic}
For any finite $H_p$-module $N$, $H^i_{\mathrm{s}} (H_p,  N) =0$ for $i \ge 3$.
\end{corollary}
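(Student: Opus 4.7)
The plan is to combine the $\CC^*$-fibration (\ref{fibration}) with Corollary \ref{cAnyModule} and the bound $\mathrm{scd}(\Gamma) \le 2$ (which follows from Theorem \ref{tNormalSeries} and Remark \ref{rBackground}). Fix a finite $H_p$-module $N$ and a class $\xi \in H^i(H_p, N)$ with $i \ge 3$. Since the total space $R := ((V \setminus \{0\}) \times T)/H_p$ of the fibration (\ref{fibration}) is an open in the quotient of a generically free $H_p$-representation, $H^i_{\mathrm{s}}(H_p, N)$ is computed as the direct limit of $H^i(R', \tilde N)$ over opens $R' = f^{-1}(B) \subset R$ with $B \subset (\Omega \times T)/\Gamma$ open; it therefore suffices, for each $\xi$, to produce a sufficiently small $B$ on which the image of $\xi$ vanishes.

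I will compare two spectral sequences via the commutative diagram of fibrations
\[
\xymatrix{
\CC^* \ar[r] \ar[d] & R' \ar[r] \ar[d] & B \ar[d] \\
\mathrm{B}(\ZZ/p) \ar[r] & \mathrm{B} H_p \ar[r] & \mathrm{B} \Gamma,
}
\]
where the left vertical map is the classifying map of the $p$-fold cover $\CC^* \to \CC^*$. The Lyndon--Hochschild--Serre spectral sequence of the bottom row has $E_2^{p,q} = H^p(\Gamma, H^q(\ZZ/p, N))$ and abuts to $H^{p+q}(H_p, N)$; the Leray spectral sequence of the top row has $E_2^{p,q} = H^p(B, \mathcal{H}^q(\CC^*, \tilde N))$ and abuts to $H^{p+q}(R', \tilde N)$, with $\mathcal{H}^0 = \widetilde{N^{\ZZ/p}}$ and $\mathcal{H}^1 = \widetilde{N_{\ZZ/p}}$ (the monodromy of $\pi_1(\CC^*) = \ZZ$ running through the center $\ZZ/p \subset H_p$). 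Because $H^q(\CC^*,-) = 0$ for $q \ge 2$, the induced map of $E_2$-pages is zero for $q \ge 2$, forcing the induced filtration on $H^i(R', \tilde N)$ to collapse into a short exact sequence
\[
0 \to E_\infty^{i,0}(\mathrm{Ler}) \to H^i(R', \tilde N) \to E_\infty^{i-1,1}(\mathrm{Ler}) \to 0,
\]
and the image of $\xi$ in each of these two pieces is a subquotient of an $E_2$-map of the form $H^p(\Gamma, L) \to H^p(B, \tilde L)$ with $p \ge i - 1 \ge 2$: namely $L = H^1(\ZZ/p, N)$ with $p = i - 1$ for the $E_\infty^{i-1,1}$ piece, and $L = N^{\ZZ/p}$ with $p = i$ for the $E_\infty^{i,0}$ piece.

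It remains to shrink $B$ so that both of these maps $H^p(\Gamma, L) \to H^p(B, \tilde L)$ become zero on our class. For $p = 2$ this is exactly Corollary \ref{cAnyModule}, which produces an open $B_1 \subset (\Omega \times T)/\Gamma$ on which $H^2(\Gamma, L) \to H^2(B_1, \tilde L)$ vanishes. For $p \ge 3$ the classifying map $B \to \mathrm{B}\Gamma$ factors through $\pi_T \colon (\Omega \times T)/\Gamma \to T/\Gamma$ (the second projection), and since $\Gamma$ acts freely on $T$, opens of $T/\Gamma$ compute the stable cohomology of $\Gamma$; because $\mathrm{scd}(\Gamma) \le 2$, there is an open $T^\circ \subset T/\Gamma$ on which the image of the class is zero, so we put $B_2 := B \cap \pi_T^{-1}(T^\circ)$. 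Replacing $B$ by $B_1 \cap B_2$ and $R'$ by $f^{-1}(B)$ accordingly, both graded images of $\xi$ vanish, and the two-step filtration argument forces $\xi \mapsto 0$ in $H^i(R', \tilde N)$, proving $H^i_{\mathrm{s}}(H_p, N) = 0$ for $i \ge 3$.

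The main technical point requiring care will be setting up the comparison of spectral sequences rigorously (in particular identifying the $\mathcal{H}^q$-coefficient sheaves and verifying naturality of the Leray filtration under the map of fibrations), and ensuring that the composite shrinking step $B_1 \cap B_2$ is cofinal in the directed system defining $H^*_{\mathrm{s}}(H_p, N)$.
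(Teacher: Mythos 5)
Your proposal is correct and follows essentially the same route as the paper: the paper also compares the Lyndon--Hochschild--Serre spectral sequence of $\ZZ/p \to H_p \to \Gamma$ with the Leray spectral sequence of the $\CC^*$-fibration over a suitably shrunken open in $(\Omega\times T)/\Gamma$, killing the rows $q\ge 2$ via $H^q(\CC^*,-)=0$, the columns $p\ge 3$ by choosing the open so that it stabilizes the cohomology of $\Gamma$ (using $\mathrm{scd}(\Gamma)=2$), and the $p=2$ column by Corollary \ref{cAnyModule}. Your explicit two-step filtration bookkeeping and the factorization through $T/\Gamma$ are just slightly more detailed versions of the same argument.
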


\begin{proof}
We have three fibrations: first the fibration $F$ given by $\mathrm{B}H_p \to \mathrm{B} \Gamma$ with typical fiber $\mathrm{B}\ZZ/p$, secondly the fibration, call it $F'$, 
\begin{eqnarray}\label{fibration}
f :  f^{-1} ((\Omega \times T)/\Gamma ) \to (\Omega \times T)/\Gamma ,
\end{eqnarray}
and lastly, the fibration $f|_{f^{-1}(U)}$, call it $F''$ where $U \subset (\Omega \times T)/\Gamma$ is a Zariski open subset such that the maps from $H^i (\Gamma , F)$ to $H^i (U, F)$ factors over $H^i_{\mathrm{s}} (\Gamma , F)$ for any $\Gamma$-module $F$ resp. induced local system $F$ on $U$. We get maps of fibrations $F'' \to F' \to F$, and corresponding maps of the associated Serre spectral sequences of the fibrations. 

We want to show that the image of $H^i (\mathrm{B}H_p, \tilde{N})$ in $H^i (f^{-1}(U), \tilde{N})$ is zero for all $i \ge 3$. Now this follows from an inspection of the corrsponding maps of spectral sequences taking into account the following facts:
\begin{itemize}
\item
the image of $H^2 (\mathrm{B}\Gamma , \mathcal{H}^j (\mathrm{B}(\ZZ/p), \tilde{N}))$ in $H^2 ( U , \mathcal{H}^j (\CC^* , \tilde{N}))$ is zero by Corollary \ref{cAnyModule}. 
\item
the image of  $H^i (\mathrm{B}\Gamma , \mathcal{H}^j (\mathrm{B}(\ZZ/p), \tilde{N}))$ in $H^i ( U , \mathcal{H}^j (\CC^* , \tilde{N}))$ for $i\ge 3$ is zero by the choice of $U$;
\item
the image of $H^i (\mathrm{B}\Gamma , \mathcal{H}^j (\mathrm{B}(\ZZ/p), \tilde{N}))$ in $H^i ( U , \mathcal{H}^j (\CC^* , \tilde{N}))$ for $j\ge 2$ is zero since then $H^j (\CC^*, \tilde{N}) = 0$.
\end{itemize}
\end{proof}

\begin{theorem}\xlabel{tHeisenberg}
The stable cohomological dimension of the Heisenberg group $H_p$ is equal to $2$. In particular, the bound given in Theorem \ref{tNormalSeries} (equal to $3$ in the present case) is not sharp. 
\end{theorem}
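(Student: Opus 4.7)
The plan is to establish $\mathrm{scd}(H_p) = 2$ by combining results already in hand for the upper bound with a short subgroup argument for the lower bound. The upper bound $\mathrm{scd}(H_p) \le 2$ is precisely the content of Corollary \ref{cStableBasic}: for every $i \ge 3$ and every finite $H_p$-module $N$ one has $H^i_{\mathrm{s}}(H_p, N) = 0$. So the only remaining tasks are to exhibit a coefficient module for which the second stable cohomology is nonzero, and to verify that the bound coming from Theorem \ref{tNormalSeries} is indeed $3$.

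For the lower bound, I would appeal to Proposition \ref{pStart}(1), which gives $\mathrm{scd}(H_p) \ge \mathrm{scd}(K)$ for any subgroup $K \subseteq H_p$. The Heisenberg group contains the elementary abelian subgroup
\[
K = \langle M(1,0,0),\, M(0,0,1) \rangle \simeq \ZZ/p \oplus \ZZ/p,
\]
since $M(0,0,1)$ is central and both generators have order $p$ (a direct check from the rule $M(a,b,c)M(a',b',c') = M(a+a', b+b', c+c'+ab')$, valid also for $p=2$). By Remark \ref{rBackground}, the stable cohomological dimension of a finite abelian group equals its rank, so $\mathrm{scd}(K) = 2$; concretely, the cup product $y_1 \cup y_2 \in H^2(K, \ZZ/p)$ of the two degree-one characters remains nonzero in the stable quotient because cup products of stable classes are stable.

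To finish, I would observe that any normal series $\{1\} = G_0 \lhd G_1 \lhd \dots \lhd G_n = H_p$ with cyclic quotients must have $n \ge 3$, because $H_p/Z(H_p) \simeq \ZZ/p \oplus \ZZ/p$ is not cyclic and so requires an intermediate term between $Z(H_p)$ and $H_p$. Thus Theorem \ref{tNormalSeries} yields only $\mathrm{scd}(H_p) \le 3$, whereas the argument above establishes $\mathrm{scd}(H_p) = 2$, exhibiting the non-sharpness claimed. Since both bounds invoke results already proved, no real obstacle remains in this final step; the genuine work was done in Corollary \ref{cStableBasic} and the Propositions \ref{pReduction} and \ref{pImageH2Basic} on which it rests.
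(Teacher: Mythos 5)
Your proof of the main claim $\mathrm{scd}(H_p)=2$ is correct, and for the upper bound it coincides with the paper's: both simply invoke Corollary \ref{cStableBasic}, where the real work lives. For the lower bound you take a slightly different route. The paper works with constant coefficients: it cites Leary's computation of $H^2(H_p,\ZZ/p)$ to produce a class $Y\in H^2(H_p,\ZZ/p)$ whose restriction to an abelian subgroup $\ZZ/p\oplus\ZZ/p$ is a product of degree-one classes, hence survives stabilization; this shows $H^2_{\mathrm{s}}(H_p,\ZZ/p)\neq 0$ with \emph{trivial} coefficients, which matters for Conjecture \ref{cSCDPGroups}. You instead push the nonvanishing up from the subgroup $K\simeq\ZZ/p\oplus\ZZ/p$ via Proposition \ref{pStart}(1), i.e.\ via Shapiro's lemma, which produces a nonzero class in $H^2_{\mathrm{s}}(H_p,\mathrm{Ind}^{H_p}_K(\ZZ/p))$ --- an induced, non-constant module. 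Since Definition \ref{defSCD} allows arbitrary finite modules, this is perfectly adequate for the theorem as stated, and your identification of $K$ (including the check that $M(1,0,0)$ has order $p$ even when $p=2$) is fine. One small wording issue: the reason $y_1\cup y_2$ is nonzero in $H^2_{\mathrm{s}}(K,\ZZ/p)$ is not that ``cup products of stable classes are stable'' (stability of a class is automatic since stable cohomology is a quotient); the point is that $y_1\cup y_2$ has nonzero image under $H^2((\ZZ/p)^2,\ZZ/p)\to H^2(\ZZ^2,\ZZ/p)$, which is the content of $\mathrm{scd}$ of an abelian group equalling its rank (Remark \ref{rBackground}).

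There is, however, a genuine error in your justification of the ``in particular'' clause. You claim that \emph{every} normal series of $H_p$ with cyclic quotients has length at least $3$, arguing that such a series must pass through $Z(H_p)$ and that $H_p/Z(H_p)$ is not cyclic; but nothing forces the series through the center. For $p=2$ the claim is in fact false: $H_2\simeq D_4$ contains a cyclic normal subgroup of order $4$ (generated by $M(1,1,0)$, which squares to the central element), so $\{1\}\lhd\ZZ/4\lhd H_2$ is a normal series with cyclic quotients of length $2$. For odd $p$ your conclusion is true, but the correct argument is that $H_p$ has exponent $p$, so every cyclic subquotient has order at most $p$ and hence any such series has length at least $3$. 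The non-sharpness assertion should therefore be read, as in the paper, as referring to the length-$3$ series coming from the central extension structure (or restricted to odd $p$), rather than to an absolute lower bound on all normal series.
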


\begin{proof}
By Corollary \ref{cStableBasic}, the stable cohomological dimension of $H_p$ is $\le 2$ and it suffices to show that 
\[
H^2_{\mathrm{s}} (H_p, \ZZ/p) \neq 0 
\]
(where $\ZZ/p$ are trivial/constant coefficients). We use the computation of the usual group cohomology $H^2 (H_p, \ZZ/p)$ in \cite{Leary}: in particular, it is shown there (Theorem 6 resp. Theorem 7, cf. also the reasoning p. 70, l. 5 ff.) that there exists an element $Y \in H^2 (H_p, \ZZ/p)$ restricting to a product of elements of degree $1$ on a subgroup $\ZZ/p\oplus \ZZ/p$ in $H_p$. Such a $Y$ must be stable. 
\end{proof}

We want to conclude with the following

\begin{conjecture}\xlabel{cSCDPGroups}
Let $G$ be a finite $p$-group. Let $d$ be the stable cohomological dimension of $G$. Then there exists a $G$-module $A$ with trivial action such that $H^d_{\mathrm{s}} (G, A) \neq 0$.
\end{conjecture}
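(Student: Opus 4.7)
My plan is to prove the conjecture by a devissage that reduces from an arbitrary finite coefficient module down to the trivial module $\mathbb{F}_p$, using a chain of short exact sequences together with the long exact sequence for stable cohomology and the fundamental input that $H^{d+1}_s(G, -) \equiv 0$ by the definition of $d$.

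First I would set $d := \mathrm{scd}(G)$ and pick a finite $G$-module $N$ with $H^d_s(G, N) \neq 0$. Since $G$ is a $p$-group we have $H^*(G, M) = 0$ for any $G$-module $M$ of order coprime to $p$, so the prime-to-$p$ part of $N$ contributes nothing and I may replace $N$ by its $p$-primary part. Then I would induct on the $p$-exponent: given $N$ annihilated by $p^k$ with $k \geq 2$, I apply the long exact sequence to
\[
0 \to pN \to N \to N/pN \to 0
\]
and use that $H^{d+1}_s(G, pN) = 0$ to conclude that either $H^d_s(G, N/pN) \neq 0$ or $H^d_s(G, pN) \neq 0$. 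In either case $N$ has been replaced by a module with strictly smaller $p$-exponent still realising $H^d_s \neq 0$. Iterating, I may assume $N$ is a finite $\mathbb{F}_p[G]$-module.

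Next, because $G$ is a $p$-group the only simple $\mathbb{F}_p[G]$-module is the trivial one, so $N$ admits a $G$-stable composition series $0 = N_0 \subset N_1 \subset \cdots \subset N_m = N$ with each quotient $N_i/N_{i-1} \cong \mathbb{F}_p$ carrying the trivial action. Applying the long exact sequence to the top extension $0 \to N_{m-1} \to N \to \mathbb{F}_p \to 0$ and using $H^{d+1}_s(G, N_{m-1}) = 0$ again shows that either $H^d_s(G, \mathbb{F}_p) \neq 0$---in which case the proof is complete with $A = \mathbb{F}_p$---or $H^d_s(G, N_{m-1}) \neq 0$, in which case I continue descending the filtration until I reach $N_1 \cong \mathbb{F}_p$.

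The principal obstacle is the validity of the long exact sequence in stable cohomology invoked above. Since $H^*_s(G, F)$ is defined as an image inside a Galois cohomology limit, and images of exact sequences are not automatically exact, the LES is not formal. I would try to establish it as follows: fix in advance a Zariski open $U \subset V^L/G$ small enough that, for each of the finitely many modules $M$ appearing in the devissage, the image of $H^*(G, M) \to H^*(U, \tilde M)$ coincides with $H^*_s(G, M)$ in degrees $\leq d+1$; such a $U$ exists because only finitely many unstable classes need be killed. Then I would compare the Serre spectral sequence for $U \to \mathrm{B}G$ at the relevant bidegrees against the one for $\pi_1(U)$, in the spirit of the proof of Proposition \ref{pReduction}, in order to produce the required exactness. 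Making this work in the generality needed, rather than in the specific setting of $\Gamma = \ZZ/p \oplus \ZZ/p$ handled there, is where I expect the chief technical difficulty to lie.
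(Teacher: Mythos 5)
First, a point of orientation: the paper offers no proof of this statement --- it appears as Conjecture \ref{cSCDPGroups} and is explicitly left open by the authors. So there is no argument in the paper to compare yours against, and the fact that the authors, having computed $\mathrm{scd}$ in several nontrivial cases, chose to state this as a conjecture should already make you suspicious that the natural d\'evissage does not go through.

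Indeed, the gap you yourself flag --- the long exact sequence for $H^*_{\mathrm{s}}$ --- is genuine, and your proposed repair does not close it. Stable cohomology is by definition the image of $H^*(G,-)$ in $\lim_U H^*(U/G,-)$; even after shrinking $U$ once and for all so that a single discrete group $\Gamma=\pi_1(U/G)$ computes the stabilization for all the finitely many modules in your filtration, what you then have is a morphism of long exact sequences (one for $G$, one for $\Gamma$), and the image of such a morphism is a complex but in general not an exact one. Concretely, the step that fails is this: take $\beta\in H^d(G,N)$ with $f(\beta)\neq 0$ in $H^d(\Gamma,N)$, and suppose its image $\pi_*\beta\in H^d(G,N/pN)$ is a nonzero but \emph{unstable} class. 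Then $H^d_{\mathrm{s}}(G,N/pN)$ sees nothing, and although $f(\beta)$ does lift to some $\bar\alpha\in H^d(\Gamma,pN)$, there is no reason for $\bar\alpha$ to lie in the image of $H^d(G,pN)$, i.e.\ to be a stable class; so you cannot conclude $H^d_{\mathrm{s}}(G,pN)\neq 0$ either. The vanishing $H^{d+1}_{\mathrm{s}}(G,pN)=0$ you invoke is of no help, since the obstruction lives among unstable classes in degree $d$, not in degree $d+1$. Appealing to the comparison of Serre spectral sequences ``in the spirit of Proposition \ref{pReduction}'' does not circumvent this: that proposition treats specific modules over the specific group $\ZZ/p\oplus\ZZ/p$ and rests on concrete geometric input (the hyperplane-arrangement model and Lemma \ref{lNotSemidirect}), not on any formal exactness property of $H^*_{\mathrm{s}}$. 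The reductions you do carry out (discarding the prime-to-$p$ part; the observation that the only simple $\mathbb{F}_p[G]$-module for a $p$-group is the trivial one) are correct but standard; the entire content of the conjecture is concentrated in the exactness statement you have not established.
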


\end{document}